\documentclass[12pt,letterpaper]{article}

\usepackage[pagebackref,bookmarksnumbered,colorlinks,
bookmarks, linkcolor=blue,breaklinks, linktocpage]{hyperref}

\usepackage[applemac]{inputenc}
\usepackage[T1]{fontenc}
\usepackage[english]{babel}
\usepackage{amsmath}
\usepackage{amssymb,amsfonts,textcomp,amsthm,amsmath,varioref}
\usepackage{array}
\usepackage{graphicx}
\usepackage{wrapfig}
\usepackage[rflt]{floatflt}
\usepackage[dvips, dvipsnames, usenames]{color}
\usepackage[numbers]{natbib}
\usepackage[mathcal]{euscript}
\usepackage{mathabx,mathrsfs}

\usepackage{geometry}
\geometry{
   twoside=false,      
    tmargin=35mm,      
    bmargin=33mm,      
    lmargin=30mm,      
    verbose,           
}

\flushbottom

 \textwidth 165mm

\makeatletter
\renewcommand\section{\@startsection{section}{1}{1em}{-\baselineskip}{0.5\baselineskip}{\normalfont\normalsize\bf}}
\makeatother

\makeatletter
\renewcommand\@seccntformat[1]{{\csname the#1\endcsname}{.}\hspace{0.25em}}
\makeatother

\makeatletter
\renewcommand\subsection{\@startsection{subsection}{2}{0em}{-\baselineskip}{0.5\baselineskip}{\normalfont\normalsize\bf}}
\makeatother

\theoremstyle{plain}
\newtheorem{lemma}{Lemma}
\newtheorem{proposition}{Proposition}
\newtheorem{theorem}{Theorem}

\theoremstyle{remark}
\newtheorem{remark}{Remark}
\newtheorem{condition}{Condition}

\theoremstyle{definition}
\newtheorem{example}{Example}
\newtheorem{definition}{Definition}

\newcommand{\HRule}{\rule{\linewidth}{0.25mm}}

\newcommand{\Z}{\mathbb{Z}}
\newcommand{\R}{\mathbb{R}}
\newcommand{\Sph}{\mathbb{S}}

\newcommand{\D}{\mathbb{D}}
\newcommand{\Zt}{\mathbb{Z}_{12}}
\newcommand{\I}{\textrm{Id}}

\newcommand{\Rc}{\mathcal{R}}
\newcommand{\Sc}{\mathcal{S}}
\newcommand{\Tc}{\mathcal{T}}
\newcommand{\Nc}{\mathcal{N}}

\newcounter{PARA}
\newcounter{DFN}

\newenvironment{defin}{\refstepcounter{DFN}\vspace{2mm}\begin{center}\quad\quad\hfill }{\hfill \textsc{
(DEF.~\theDFN)}\end{center} }

\setcounter{PARA}{0}
\setcounter{DFN}{0}

\begin{document}
\title{ {\Huge{ A Tonnetz model for pentachords }}}
\author{{{Luis A. Piovan}}}

\date{}
\maketitle

\noindent{\sc{KEYWORDS.}}
neo-Riemann network, pentachord, contextual group, Tessellation,  Poincaré disk, David Lewin, Charles Koechlin, Igor Stravinsky.

\ 

\noindent{\sc{ABSTRACT.}}
This article deals with the construction of surfaces that are suitable for representing pentachords or $5$-pitch segments that are in the same $T/I$ class. It is a generalization of the well known \"Ottingen-Riemann torus for triads of neo-Riemannian theories. Two pentachords are near if they differ by a particular set of contextual inversions and the whole contextual group of inversions produces a Tiling (Tessellation) by pentagons on the surfaces. A description of the surfaces as coverings of a particular Tiling is given in the twelve-tone enharmonic scale case.

\noindent\HRule 

{\centering{
\section{Introduction}\label{sec:int}
}}

The interest in generalizing the \"Ottingen-Riemann Tonnetz was felt after the careful analysis David Lewin made of Stockhausen's Klavierst\"uck III \citep[][Ch.~2]{lew207}, where he basically shows that the whole work is constructed with transformations upon the single pentachord $\langle C, C\#, D, D\#, F\#\rangle$. A tiled torus with equal tiles like the usual Tonnetz of Major and Minor triads is not possible by using pentagons (you cannot tile a torus or plane by regular convex pentagons). Therefore one is forced to look at other surfaces and fortunately there is an infinite set of closed surfaces where one can gather regular pentagonal Tilings. These surfaces (called hyperbolic) are distinguished by a single topological invariant: the genus or number of holes the surface has (see Figure~\ref{fig:g4})\footnote{Not all genera admit a regular pentagonal Tiling.}.   

The analysis\footnote{Prepint by David Lewin cited in \citep{fisa05}.} of Schoenberg's, Opus 23, Number 3, made clear the type of transformations\footnote{In this work again there is a fundamental generating pentachord $\langle B\flat, D, E, B, C\#\rangle$ and the group of transformations contain the $(T/I)$-class of this pitch segment.} to be used. These are the basic contextual transformations (inversions) we are concerned in this article and they are given in (DEF. \ref{defin:abe}).

The main result of this paper is given in Theorem~\ref{theo}. We propose a surface of a high genus as a topological model for pentachords satisfying certain conditions. We give a construction of this surface in terms of a covering of another surface of smaller genus which carries a pentagonal Tiling on it. The structure of the group of contextual transformations is studied and related to the surface. Many examples of pentachord or $5$-pitch segments (and their $T/I$ classes) including the two examples above fit into this kind of surfaces as a regular Tiling. 

In a recent paper Joseph Straus \citep{str11} gave an interpretation of passages of Schoenberg's Op. 23/3 and  Igor Stravinsky's In Memoriam Dylan Thomas. He uses a different set of transformations on pentachords or $5$-pitch segments: a combination of inversions and permutations. Straus describes a space for these transformations and it would be interesting to check whether this kind of contextual groups fits into our framework.

Also, the citations in \citep[][pag. 49]{je06} are an useful source of examples for studying $5$-pitch segments. For instance, 
A. Tcherepnin uses the different modes of $\langle C, D, E, F, A\rangle$ and $\langle C, D, E\flat, G, A\flat\rangle$ in his Ops. 51, 52, and 53.

We use the dodecaphonic system. This is best suited for Stockhausen or Schoenberg's works and it translates into numbering pitch classes modulo octave shift by the numbers in $\Zt$. However, as we see in the following passages other systems are admissible. For instance, diatonic with numbers in $\Z_7$.  

\begin{example}\label{exa:koechlin}
The piece IV of Les Heures Persanes, Op. 65 by Charles Koechlin contains passages like that of Figure~\ref{fig:koechlin1}, where starting with a given pentachord he makes several parallel pitch translates but in a diatonic sense. 
In this case you have to assign to the notes a number in $\Z_7$ (for instance, $[C,D,E,F,G,A,B] \rightarrow [0,1,2,3,4,5,6]$). There are $14$  $(T/I)$-forms  for any pentachord.
\begin{figure}
 \centering
    \includegraphics[width=\textwidth]{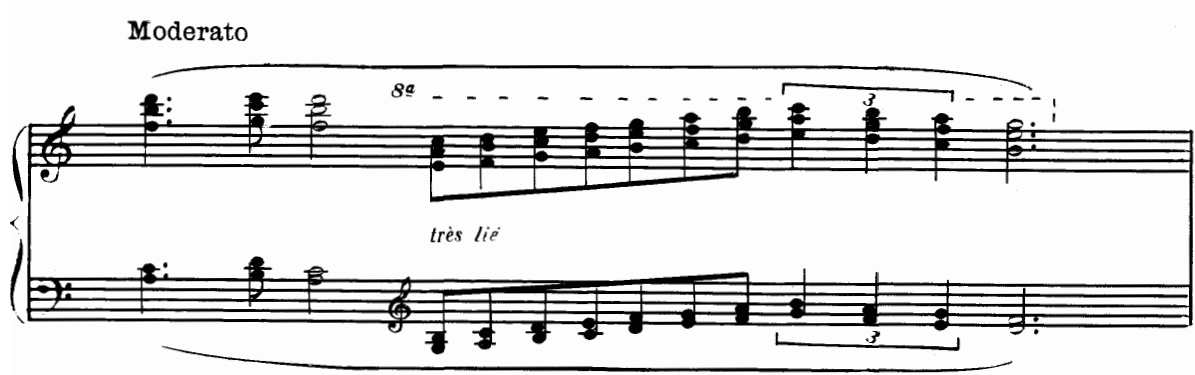}
 \caption
 {Excerpt of Les Heures Persanes, Op. 65, IV. Matin Frais dans la Haute Vall\'ee by Charles Koechlin. Copyright 1987 by Editions Max Eschig.}\label{fig:koechlin1}
\end{figure}
\end{example}  
\begin{example}\label{exa:stravinsky}
The piano part of Figure~\ref{fig:petrushka} from the Ballet Petrushka by Igor Stravinsky also moves pentachords in a parallel diatonic way. The same observations as in Example~\ref{exa:koechlin} apply.
\begin{figure}
 \centering
    \includegraphics[width=\textwidth]{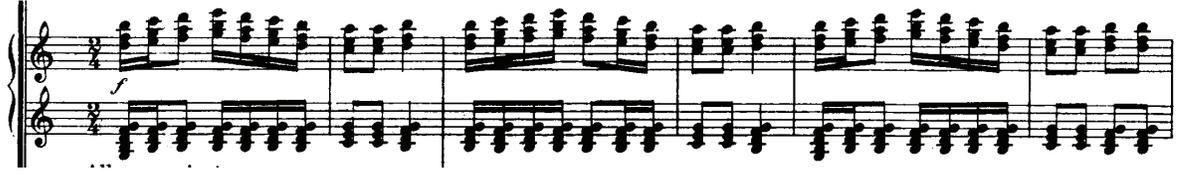}
 \caption
 {Piano part of Russian Dance, Part I (No. IV) of the Ballet Petrushka by Igor Stravinsky: Petrushka, the Moor and the Ballerina suddenly begin to dance, to great astonishment of the crowd.}\label{fig:petrushka}
\end{figure}
\end{example}  
In this article we are not going to pursue the case of a pitch system with numbering in a general $\Z_m$. Our approach requires to study each numbering system on a case by case basis. Although, I presume a theory over any $\Z_m$ is possible, but with heavier machinery. 

In Section~\ref{sec:pitch} we introduce the contextual inversions and translation operators. We give two lemmas relating these operators. Several examples are developed in cases of triads and tetrachords. Tessellations are introduced. In Section~\ref{sec:triangle} surfaces and Triangle groups (groups related to Tilings of a given surface) are explored. Several formulas are given on how to compute the genus in terms of a  Tiling of type $\{F,p,q\}$ ($F$ regular convex $p$-gons where $q$ polygons meet at each vertex).  Section~\ref{sec:incidence} is devoted to describe the structure of  the transformation group. This group fits into an exact sequence with two terms: a quotient isomorphic to the dihedral $\D_{12}$ (or a subgroup of it) that maps into the triangle group of a surface with minimal Tiling, and an abelian subgroup. Our transformation group is also a semi-direct product of an inversion operator and an abelian group of translation operators.  Section~\ref{sec:surface} contains the main theorem that gives the construction of a tiled surface by pentagons so that all transforms of a given pentachord fit together. This surface is a covering of a genus $13$ surface, which is in turn  a $2-1$ cover of the genus $4$ Bring\footnote{Surface already introduced in the $19^{\text{th}}$ century that can be regularly tiled into $12$ pentagons.} surface. 

{\centering{
\section{Tone networks}\label{sec:ton}
}}


Tone networks (Tonnetz) where invented by Leonhard Euler as a way of visualizing harmonically related tones by means of a graph where points represent pitches, or by its dual graph, where points represent tones. The musicologist Hugo Riemann extensively used these networks in his theory. More recently, group theorists following David Lewin and Richard Cohn related these ideas with $LPR$ operations \citep[see][for a historical perspective]{co98}. When we assume enharmonic equivalence and equal-tempered tuning the familiar \"Ottingen-Riemann Tonnetz associated with Major and Minor triads becomes double periodic and the graph can be wrapped around a torus ${\mathbb T}^2$ in a regular way. That is, the torus surface splits into equilateral (curved) triangle tiles whose sides are the edges of the    graph and all of its vertices have 6 incident edges. Such an object is known as a regular tessellation of type $\{3,6\}$ on the torus. 

In a series of papers, Richard Cohn \citep{co97,co03} relates the geometry of certain Tonnetze with a kind of voice leadings which he calls ``parsimonious''. This is moving from one tone to a contiguous tone on the torus Tonnetz. Although this kind of voice leadings seem to have more to do with acoustic or harmonic proximity \citep{ty09} rather than motion by a few close notes at a time, Tonnetze allow us to decompose Tone space into different fibers: the tori. This brings a sort of better understanding into the geometry of this complex space. The geometry of Tonnetze is spelled out by a set of transformations called contextual transformations (the group generated by $LPR$ operations in case of the \"Ottingen-Riemann Tonnetz). It was shown by Lewin in his analysis of Schoenberg's Opus 23, No. 3, and in general by Fiore and  Satyendra \citep{fisa05} (with a slightly different definition of contextual group) that a quotient of the group of contextual transformations is precisely isomorphic to the dual of the 24 $T-$ and $I-$forms of any pitch segment $\langle x_1, \dots, x_n\rangle$ \citep[see also][]{crfisa09}\footnote{This paper has a nice account for the dictionary between notes and the integers $\mod 12$.}.

{\centering{
\section{Pitch classes, Operators and Tilings}\label{sec:pitch}
}}

Our main objects will be pitch classes of length $n$. These classes will be vectors whose entries are real numbers representing pitches modulo $12 \mathbb{Z}$. A pitch shift of 12 is an octave shift and so pitches live in a circle of length 12 (where $0\equiv 12$) and pitch classes of length $n$ are elements of an $n-$dimensional torus $\mathbb{T}^n=(\mathbb{R}/12\mathbb{Z})^n$. That is, we regard only the octave shift symmetry ``O'' from the ``OPTIC'' set of symmetries\footnote{The vectors will be ordered classes. However, later on we will have to consider the ``OP'' equivalence, that is octave shift and permutations of the vector entries. This is the symmetrization of the torus $\mathbb{T}^n$: $S(\mathbb{T}^n)=\mathbb{T}^n/S_n$.} considered in \citep{caquty08}, \citep[see also][]{ty11}. Most of the time we will be concerned with the set of integer pitches $\{0, 1, 2, 3, 4, 5, 6, 7, 8, 9, 10, 11\}=\mathbb{Z}_{12}$ which are in one to one correspondence with the notes of the chromatic scale $\{C, C\#=D\flat, D, D\#=E\flat, E, F, F\#=G\flat, G, G\#=A\flat, A, A\#=B\flat, B\}$\footnote{For short we will denote $t=10$, $e=11$.}. These twelve number classes allow addition and multiplication operations as usual numbers do, but always computing the positive remainder of the division by 12. For instance $(-7).5=5.5=25=1$; $10+7=17=5$. 

\subsection{Main Definitions}\label{parala:def}
The usual translation and inversion operators are defined in $\mathbb{Z}_{12}$: $T_n(x):=x+n$, with $n\in\Zt$, $x\in\Zt$ and $I(x):=-x$ with $x\in\Zt$. Also, $I_n(x):=T_n\circ I (x)=-x+n$, $n\in\Zt$ and $x\in\Zt$ is the inversion around $n$. Remember that when making computations with these operators all of them should be carried out in $\Zt$. The group generated by $\{I,T_1\}$ (the $T/I$ group) is isomorphic to the dihedral group $\D_{12}$. His elements are $\{T_0, T_1, \dots, T_{11}, I_0=I, I_1, \dots, I_{11}\}$ and the following relations hold $I^2=\I, \ T_1^{12}=\I, \ I\circ T_1 \circ I = T_1^{-1}$. Here $\I=T_0$ is the identity operator and the power $T_1^m=T_m$ means compose $T_1$ $m$ times.

Each of these operators are defined component-wise on vectors: $T_m \langle x_1, x_2, \dots, x_n\rangle:=\langle T_m(x_1), T_m(x_2), \dots, $ $T_m(x_n)\rangle$, and $I_m \langle x_1, x_2, \dots, x_n\rangle:=\langle I_m(x_1), I_m(x_2), \dots, I_m(x_n)\rangle$. On segments of length $n$ we define our contextual operators to be the operators $p_{ij}$ with $i\neq j$, $1\leq i, j \leq n$, defined as follows\footnote{These definition  already appears in \citep{fisa05} but comes from earlier work by Lewin et al.}:
\begin{defin}\label{defin:abe}
$p_{ij}\langle x_1, x_2, \dots, x_n\rangle:=I_{x_i+x_j}\langle x_1, x_2, \dots, x_n\rangle.$
\end{defin}
All of them have order 2, namely $p_{ij}^2=\I$, and they satisfy some relations that we will state later. 

We also consider the permutation operators $\tau_{ij}$ and $\sigma$, $1\leq i < j \leq n$. $\tau_{ij}$ transposes only the $i$ and $j$ coordinates of the vector while $\sigma$ ciclicaly permutes each coordinate:
$$\tau_{ij}\langle x_1, \dots, x_i, \dots, x_j, \dots, x_n\rangle:=\langle x_1, \dots, x_j, \dots, x_i, \dots, x_n\rangle.$$
$$\sigma\langle x_1, x_2, \dots, x_{n-1}, x_n\rangle:=\langle x_2, x_3, \dots, x_n, x_1\rangle.$$
The full symmetric group $S_n$, which has $n!$ elements, is generated by $\tau_{i,i+1}$ and $\sigma$, for any $i\geq 1$. Other sets of generators are the following $\{\tau_{12}, \tau_{23}, \dots, \tau_{n-1,n}\}$; $\{\tau_{12}, \tau_{13}, \dots, \tau_{1n}\}$; and any set containing a 2-cycle and an n-cycle.

\subsection{Some Lemmas}\label{parala:as}
In order to prove statements  about our contextual operators we find useful to represent them by matrices with entries in $\Zt$. Operations with matrices should be carried out in $\Zt$ and statements about matrices will transfer to statements about the elements of the group. So the contextual operators (as well as the elements of $S_n$) can be thought of as linear operators while the elements of the $T/I$ group are in the affine group. Namely, $T_m$ is translation by the vector $\langle m, m, \dots, m\rangle$ and $I_m$ is inversion plus translation by $\langle m, m, \dots, m\rangle$. 

We define a new linear operator. For any set of four indexes  $i, j, h, k$, $1\leq i, j, h, k\leq n$ let 
\begin{defin}\label{defin:bee}
$T^{hk}_{ij}\langle x_1, x_2, \dots, x_n\rangle:=T_{x_h+x_k-x_i-x_j}\langle x_1, x_2, \dots, x_n\rangle.$
\end{defin}
In case one of the upper indices coincides with a lower index we cancel them and write for short $T^k_j=T^{ik}_{ij}$.

\begin{lemma}\label{lem:first}
The contextual operators $p_{ij}$, $1\leq i < j \leq n$, satisfy the following relations.
\begin{align}
p_{ij}\circ p_{hk}&=T^{hk}_{ij}, &\textrm{for any } i, j, k, l, \label{eq:first}\\
\sigma^{i-1}\circ p_{i,i+1}&= p_{12}\circ\sigma^{i-1}, &2\leq i, \label{eq:second}\\
\tau_{ij}\circ p_{hj}&=p_{hi}\circ \tau_{ij}, &h< i<j,    \\
\tau_{2i}\circ p_{1i}&=p_{12}\circ \tau_{2i}, &3\leq i. \label{eq:four}
\end{align} 
\end{lemma}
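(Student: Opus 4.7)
The plan is to prove all four relations by direct component-wise computation, since every operator in sight is defined entry-by-entry. Write $\vec{x}=\langle x_1,\dots,x_n\rangle$ and note that by Definition~\ref{defin:abe} the $\ell$-th coordinate of $p_{ij}\vec{x}$ is simply $-x_\ell+x_i+x_j$. Both the permutations $\tau_{ij},\sigma$ and the translation $T^{hk}_{ij}$ likewise have transparent coordinate descriptions, so each of (1)--(4) will reduce to verifying an identity of scalars in $\Zt$ at each coordinate.

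For \eqref{eq:first}, first apply $p_{hk}$ to get a vector whose $\ell$-th entry is $y_\ell=-x_\ell+x_h+x_k$, and in particular $y_i+y_j=-x_i-x_j+2(x_h+x_k)$. Applying $p_{ij}$ then produces $-y_\ell+y_i+y_j$, and the two copies of $x_h+x_k$ cancel against $-(-x_\ell+x_h+x_k)$ to leave $x_\ell+(x_h+x_k-x_i-x_j)$, which is exactly the $\ell$-th coordinate of $T^{hk}_{ij}\vec{x}$.

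For the remaining three identities the strategy is the same on both sides: the inversion center appearing is a sum of two particular entries of the vector, and one checks that the permutation occurring on the other side rewrites those two entries into exactly the ones needed for the other inversion. Concretely, for \eqref{eq:second} the key point is that $(\sigma^{i-1}\vec{x})_1=x_i$ and $(\sigma^{i-1}\vec{x})_2=x_{i+1}$, so $p_{12}\sigma^{i-1}\vec{x}$ has inversion center $x_i+x_{i+1}$ — precisely the center used by $p_{i,i+1}$ before applying $\sigma^{i-1}$; the two sides then agree coordinate-by-coordinate because $\sigma^{i-1}$ commutes with the scalar map $z\mapsto -z+x_i+x_{i+1}$ applied entry-wise. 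Identities \eqref{eq:four} and (\ref{eq:four}-1) (i.e.\ the third relation) are completely analogous: in both cases $\tau_{2i}$ (resp.\ $\tau_{ij}$) swaps exactly the two coordinates whose values determine the inversion center, so after the swap the multisets $\{x_1,x_i\}$ and $\{w_1,w_2\}$ (resp.\ $\{x_h,x_j\}$ and $\{w_h,w_i\}$) coincide, forcing the two inversion centers to be equal; a coordinate check at $\ell=i,j$ and at $\ell\neq i,j$ (three cases) finishes each argument.

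I do not anticipate a real obstacle: everything is linear arithmetic modulo $12$. The only minor care needed is bookkeeping — writing out the image of $\ell=i$ and $\ell=j$ separately from the generic coordinate in (3) and (4) so that one sees the ``swap'' of the inversion center explicitly, and being careful in (2) that $\sigma$ is a cyclic shift in the direction $\ell\mapsto \ell+i-1\pmod n$ and not its inverse. Once the coordinate formulas for $p_{ij}$, $\sigma^{i-1}$, $\tau_{ij}$ and $T^{hk}_{ij}$ are written down in one place, each of the four relations collapses to a one-line identity in $\Zt$.
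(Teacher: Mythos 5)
Your proposal is correct and follows essentially the same route as the paper: a direct coordinate-wise computation in $\Zt$, with the same key observations — the telescoping of the inversion center in \eqref{eq:first}, the fact that $(\sigma^{i-1}\vec{x})_1=x_i$ and $(\sigma^{i-1}\vec{x})_2=x_{i+1}$ for \eqref{eq:second}, and the preservation of the inversion center under the transposition for the last two relations. The only cosmetic difference is that the paper deduces \eqref{eq:four} as a special case of the third relation rather than recomputing it.
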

\begin{proof}First notice that by (DEF.~\ref{defin:abe}) we have $p_{ij}=p_{ji}$. So we consider those $p_{ij}$ with $i<j$. We have that $p_{ij}\circ p_{hk}\langle x_1, x_2, \dots, x_n\rangle=p_{ij}I_{x_h+x_k}\langle x_1, x_2, \dots, x_n\rangle$. Now $\lambda=I_{x_h+x_k}(x_i)+I_{x_h+x_k}(x_j)=2 (x_h+x_k)-(x_i+x_j)$, and applying $I_{\lambda}$ to an element $I_{x_h+x_k}(x_r)=x_h+x_k-x_r$ we get $x_h+x_k-x_i-x_j+x_r$. This shows (1). 

Compute $p_{12}\sigma^{i-1}\langle x_1, x_2, \dots, x_n\rangle$. Evaluated on each coordinate $x_r$ gives $\sigma^{i-1}(x_1)+\sigma^{i-1}(x_2)$ $-\sigma^{i-1}(x_r)=x_i+x_{i+1}-\sigma^{i-1}(x_r)$, and this is precisely the left hand operator in (2) when evaluated on the coordinate $x_r$. This proves (2) in case $i<n$. If $i=n$ we make the convention $i+1=1$ and the result also hods.

Since $\tau_{ij}$ exchanges the entries $x_i$ and $x_j$, the right hand side of (3) on coordinate $x_r$ takes the value $x_h+x_j-x_r$ if $r$ is different from either $i$ or $j$. On $x_i$ the right hand side is equal to $x_h$ and on $x_j$ is equal to $x_h+x_j-x_i$. An easy checking gives the same values on the left hand side of (3). Finally (4) is a consequence of (3).
\end{proof}

\begin{lemma}\label{lem:second} The set of operators $p_{ij}$'s and $T^{hk}_{ij}$'s satisfy the following with respect to a translation $T_m$ and inversion $I$.
\begin{align}
p_{ij}\circ T_m&=T_m\circ p_{ij}          &{\text{for any }} i, j, \text{ indices and } m\in\Zt,\\
p_{ij}\circ I&=I\circ p_{ij}             &\textrm{for any indices } i, j,  \\
T^{hk}_{ij}\circ T_m&=T_m\circ T^{hk}_{ij} &{\text{for any }} i, j, h, k, \text{ indices and } m\in\Zt,\\
T^{hk}_{ij}\circ I&= I\circ T^{hk}_{ij} &\textrm{for any indices } i, j, h, k.
\end{align}
Thus, any subgroup generated by a set of $T^{hk}_{ij}$'s is abelian. Moreover, $T^{hk}_{ij}=T^{h}_{i}\circ T^{k}_{j}$, $(T^{hk}_{ij})^{-1}=T^{ij}_{hk}$, and the subgroup generated by any $p_{uv}$ and $T^{hk}_{ij}$ is isomorphic to the dihedral group $\D_{12}$.
\end{lemma}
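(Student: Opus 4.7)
The plan is to reduce every assertion to an elementary coordinate computation in $\Zt$ and then bootstrap the dihedral claim from Lemma~\ref{lem:first}(1). The guiding observation is that $p_{ij}$ acts on the $r$-th coordinate as $x_r\mapsto x_i+x_j-x_r$ and that $T^{hk}_{ij}$ acts as the scalar translation $\vec{x}\mapsto \vec{x}+c(\vec{x})\vec{1}$ with $c(\vec{x}) = x_h+x_k-x_i-x_j$. Substituting $T_m\vec{x}$ or $I\vec{x}$ into these formulas, the shift $m$ or the sign flip passes transparently through both operators; the cancellation that does the work is that the four-fold difference in $c$ is invariant under adding a constant vector and only changes sign under $I$, which is exactly what the four displayed identities demand.

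Next I would exploit the same invariance to establish the abelian structure together with the factorization $T^{hk}_{ij}=T^h_i\circ T^k_j$ and the inverse formula: if $\vec{y}=\vec{x}+c'\vec{1}$, then $y_h+y_k-y_i-y_j = x_h+x_k-x_i-x_j$, so composing two operators of type $T$ produces the symmetric scalar translation $c+c'$ and commutativity is immediate. The special cases $T^h_i\circ T^k_j$ and $T^{ij}_{hk}\circ T^{hk}_{ij}$ then give the factorization and the inverse directly from the identities $(x_h-x_i)+(x_k-x_j)=x_h+x_k-x_i-x_j$ and $(x_h+x_k-x_i-x_j)+(x_i+x_j-x_h-x_k)=0$.

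For the $\D_{12}$ assertion I would rewrite $T^{hk}_{ij}=p_{ij}\circ p_{hk}$ using Lemma~\ref{lem:first}(1) and conjugate:
\[
p_{uv}\circ T^{hk}_{ij}\circ p_{uv} \;=\; (p_{uv}\circ p_{ij})\circ(p_{hk}\circ p_{uv}) \;=\; T^{ij}_{uv}\circ T^{uv}_{hk} \;=\; T^{ij}_{hk} \;=\; (T^{hk}_{ij})^{-1},
\]
where the final simplifications invoke the abelian translation arithmetic just derived. Together with $p_{uv}^2=\I$ this is a dihedral presentation. The step I expect to be the main obstacle is verifying that $T^{hk}_{ij}$ has order exactly $12$, so that the group is not collapsed to a proper quotient of $\D_{12}$: since $c(\vec{x})$ depends on the input, one has to treat $T^{hk}_{ij}$ as an abstract operator on all of $\Zt^n$ and observe that as $\vec{x}$ ranges over $\Zt^n$ the scalar $c(\vec{x})$ attains every residue mod $12$, so that $12$ is the smallest integer $m$ for which $m$-fold iteration of $T^{hk}_{ij}$ is the identity on every input vector.
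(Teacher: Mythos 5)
Your proposal is correct and takes essentially the same approach as the paper: direct coordinate checks for (5)--(8), the conjugation identity $p_{uv}\circ T^{hk}_{ij}\circ p_{uv}=(T^{hk}_{ij})^{-1}$ obtained by rewriting $T^{hk}_{ij}=p_{ij}\circ p_{hk}$ via Lemma~\ref{lem:first}(1), and the observation (relegated to a footnote in the paper) that $T^{hk}_{ij}$ has order exactly $12$ as an operator on all of $\Zt^n$ because the scalar $x_h+x_k-x_i-x_j$ attains every residue. The only cosmetic difference is that the paper deduces (7) and (8) from (5) and (6) using Lemma~\ref{lem:first}(1), whereas you recompute them directly from the scalar-translation description of $T^{hk}_{ij}$.
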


\begin{proof} To show (5) we check the formula for $m=1$. $p_{ij}T_1\langle x_1, x_2, \dots, x_n\rangle=p_{ij}\langle x_1+1, x_2+1, \dots, x_n+1\rangle=\langle x_i+x_j-x_1+1,x_i+x_j- x_2+1, \dots, x_i+x_j-x_n+1\rangle=T_1p_{ij}\langle x_1, x_2, \dots, x_n\rangle$. Formula (6) is also straight forward $p_{ij}I\langle x_1, x_2, \dots, x_n\rangle=p_{ij}\langle -x_1, -x_2, \dots, -x_n\rangle=\langle -x_i-x_j+x_1, -x_i-x_j+x_2, \dots, -x_i-x_j+x_n\rangle=Ip_{ij}\langle x_1, x_2, \dots, x_n\rangle$.   By virtue of formula~\eqref{eq:first} we get (7) and (8) from (5) and (6). Any subgroup generated by a set of $T^{hk}_{ij}$'s is abelian because of (DEF.~\ref{defin:bee}) and (7). Also, since $p_{uv}$ has order $2$ and $T^{hk}_{ij}$ has order $12$ in general\footnote{We make the distinction between $T^{hk}_{ij}$ applied to a generic pitch class and to a particular $(T/I)$ set generated by a fixed  pitch class $\langle x_1, x_2, \dots, x_n\rangle$. In the first case, since any values for the entries of the vector can occur (in particular, a vector for which $x_h+x_k-x_i-x_j=1$), we get that $(T^{hk}_{ij})^{12}=\I$. As for the second case, the order of $T^{hk}_{ij}$ can be any divisor of $12$, namely $2, 3, 4, 6,$ or $12$.}, it will suffice to prove that $p_{uv}T^{hk}_{ij}p_{uv}=(T^{hk}_{ij})^{-1}$. Now $p_{uv}T^{hk}_{ij}p_{uv}=p_{uv}p_{ij}p_{hk}p_{uv}=T^{ij}_{uv}T^{uv}_{hk}=T^{ij}_{hk}=(T^{hk}_{ij})^{-1}$
\end{proof}

\subsection{Coxeter groups}\label{parala:coxeter}
As remarked in [\ref{parala:def}] after (DEF.~\ref{defin:abe}), the operators $p_{ij}$'s are symmetries. However, because of equation~\eqref{eq:first} they satisfy $(p_{ij}p_{hk})^{12}=\I$ since $(T^{hk}_{ij})^{12}=T^{12}_{x_h+x_k-x_i-x_j}=T_{12(x_h+x_k-x_i-x_j)}=T_0$. So at least, the group generated by these contextual transformations is a quotient of the Coxeter group abstractly\footnote{This means that we are regarding the $p_{ij}$'s as the generators of a Coxeter group merely with the relations  satisfied by Coxeter groups and no extra relation.}  generated by  $p_{ij}$'s and with relations $p_{ij}^2=(p_{ij}p_{kl})^{12}=\I$.
\setcounter{definition}{2}
\begin{definition}
A Coxeter group is a group defined by a set of generators $\{R_1, R_2, \dots, R_s\}$ and relations $\{R_i^2=(R_i R_j)^{m_{ij}}=\I$, $1\leq i \leq s$, $m_{ij}\geq2$ positive integers for $i<j\}$. 
\end{definition}
Coxeter groups are usually infinite but have nice geometric realizations as reflections about hyperplanes in $n$-dimensional Euclidean spaces \citep[][chap. 9]{coxmos80}. Also, some of these groups are presented with two generators (one of them a reflection), and in these cases the target space is a surface \citep[][chap. 8]{coxmos80}. 

The symmetric group $S_n$ can be presented with different sets of generatos and relations \citep[][chap. 6]{coxmos80} and in particular as a quotient of a Coxeter group in the generators $\{\tau_{12}, \tau_{23}, \dots, \tau_{n-1,n}\}$.

\subsection{Regular Tilings}\label{parala:polygon}
$LPR$ operations on a triadic segment are just the operations induced by the group $G_3$ generated by $\{p_{12}, p_{13}, p_{23}\}$ \citep{crfisa09}. The \"Ottingen-Riemann torus is constructed by associating with the segment $\langle x_1, x_2, x_3 \rangle$ an oriented equilateral triangle with consecutive  vertices $\{1, 2, 3\}$ in correspondence with the notes of the segment. Then one reflects this ``Tile'' along the side $ij$ (meaning reflecting by $p_{ij}$) and glues the two tiles along the common side. One continues in this way by applying all the elements  of $G_3$ and hopes for the tiles to match together into a surface $S$. 

A Tiling on a surface $S$ by a convex polygon $T$ (for instance a regular polygon) is the action of a group on $S$ such that you can cover the surface $S$ with translations of $T$ by elements of the group in such a way that two tiles are either disjoint, they meet at a common vertex or they meet along a common side (perfectly interlocking with each other). ``Tilings'' or ``Tessellations'' with a pattern Tile $T$ on a surface $S$ are not always possible; they depend on the shape of the tile and the topology of the surface. Tilings on the plane $\R^2$ or the sphere $\Sph^2$ (polyhedra)  described in a mathematical fashion were basically initiated by Kepler in his book \citep{ke19}. Tilings in the plane with different shapes, patterns and group of symmetries are thoroughly studied in \citep{grsh86} and polyhedra in \citep{cox74}. A  more recent account on Tessellations and Symmetries is in the beautiful book by Conway et al. \citep{cobugo08}. 

The problem of finding a Tiling on a surface $S$ by a number $F$ of equal regular (possibly curved) polygons with $p$ sides such that at each vertex the same number $q$  of (incident) polygons meet is called the problem $\{F, p, q\}$ on $S$.  When there is no regarding of the number $F$ of tiles or faces we call it simply the problem $\{p, q\}$ on $S$. Solving this problem depends on the group of automorphisms that a surface $S$ admits and an attempt to classifying them is done in a series of papers (at least up to genus 101) by Conder et al. \citep{condob01,con06}. For instance, as well known, the only possible Regular Tessellations on the sphere $\Sph^2$ are the usual regular simple polyhedra: 
\begin{enumerate}
\item the regular {\bf tetrahedron} $\{3,3\}$, with group of symmetries of order 12: $A_4$ (the even permutations in $S_4$),
\item the {\bf cube} $\{4,3\}$ and the {\bf octahedron} $\{3,4\}$, with group of symmetries of order 24: $S_4$, 
\item the {\bf dodecahedron} $\{5,3\}$ and the {\bf icosahedron} $\{3,5\}$, with group of symmetries of order 60: $A_5$ (the alternating group of $S_5$).
\end{enumerate}

In general, if a finite Tessellation on a surface $S$  is found, the number of faces divides the order of the group of symmetries.
On the torus the only possible Regular Tessellations are of type $\{4,4\}$, $\{6,3\}$ and $\{3,6\}$. In this case any number of faces are allowed as long as they close up to a torus.

 An exploration of the  possibility of drawing Regular Tilings in Computer Graphics is given in the paper by van Wijk \citep{van09}.

\subsection{Trichords}\label{parala:g3}
Now we deduce the abstract structure of the group $G_3$ defined in [\ref{parala:polygon}] in order to explain
 the construction of the torus. By formula~\eqref{eq:second}  we have $p_{23}=\sigma^{-1}p_{12}\sigma$
 \begin{figure}
 \centering
    \includegraphics[width=0.30\textwidth]{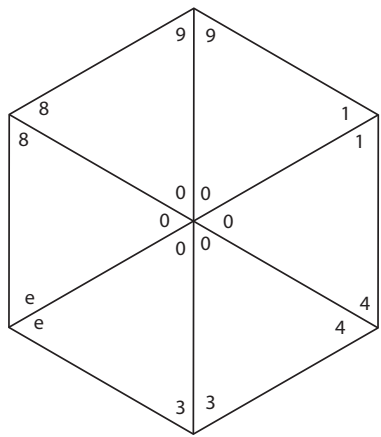}
 \caption{incidence for the segment $\langle0, 1, 9\rangle$ at vertex $0$}\label{fig:hexag}
\end{figure}
  and $p_{13}=\sigma^{-2}p_{12}\sigma^2$. Therefore it is natural to map $G_3$ into the group generated by $\{p_{12}, \sigma\}$. In order to say something about this new group we try to find relations between $p_{12}$ and $\sigma$. Write them as matrix operators: $p_{12}=\left(\begin{smallmatrix} 0&1&0\\1&0&0\\1&1&-1\end{smallmatrix}\right)$, $\sigma=\left(\begin{smallmatrix} 0&1&0\\0&0&1\\1&0&0\end{smallmatrix}\right)$. 
Then $z=\sigma p_{12}=\left(\begin{smallmatrix}1&0&0\\1&1&-1\\0&1&0\end{smallmatrix}\right)$, and one checks by computing successive powers of $z$ that $z^6=\I$. So the group generated by $\{p_{12}, \sigma\}$ is contained in the ``rotation'' group: $\langle A, B\ |\ A^2=\I, B^3=\I, (B^{-1}A)^6=\I\rangle$. Such groups appear precisely as generator groups of Regular Tilings. The introduction of $\sigma$ together with the contextual transformations (inversions) $p_{ij}$'s makes sense geometrically since $\sigma$ is a rotation of the triangle vertices. 


We make the following algebraic convention:  glue the triangle associated to the ordered vector $\langle x_1, x_2, x_3\rangle$  with the triangle associated with $\langle x_1, x_4, x_2\rangle$ along the side $\langle x_1, x_2\rangle$, if and only if $x_4=I_{x_1+x_2}(x_3)$. 
\begin{example} We show in the following example the successive tiles about the first vertex by applying $z$: $\langle0, 1, 9\rangle\xrightarrow{z}\langle0, 4, 1\rangle\xrightarrow{z}\langle0, 3, 4\rangle\xrightarrow{z}\langle0, e, 3\rangle\xrightarrow{z}\langle0, 8, e\rangle\xrightarrow{z}\langle0, 9, 8\rangle\xrightarrow{z}\langle0, 1, 9\rangle$. Pictorically we see this in Figure~\ref{fig:hexag}. 

By reflecting along the different sides of the triangles we can complete  the $24$  tiles of the $T/I$ class for $\langle0, 1, 9\rangle$. In this case $p_{13}p_{23}=T^2_1$ acts as $T_1$ (translation by $1$). So, by acting with $G_3$ on the given pitch segment, we get the full set of translated and inverted pitch classes associated with   $\langle0, 1, 9\rangle$.

A minimal Tiling containing the $24$ elements of the $T/I$ translates of $\langle0, 1, 9\rangle$ is pictured in Figure~\ref{fig:small}. 
\end{example}

\begin{figure}
 \centering
    \includegraphics[width=0.60\textwidth]{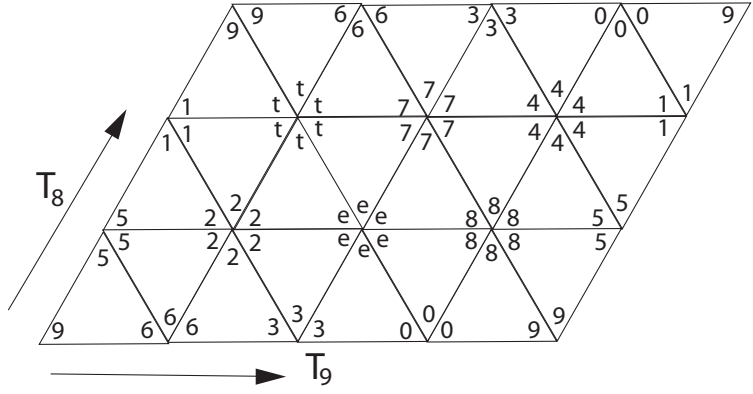}
 \caption{minimal torus for $\langle0, 1, 9\rangle$}\label{fig:small}
\end{figure}

In this example $p_{12}p_{23}=T^3_1\equiv T_9$, and $p_{12}p_{13}=T^3_2\equiv T_8$. By lemma~\ref{lem:second},   $T_8$ and  $T_9$ commute. They are the usual translations on the torus of Figure~\ref{fig:small} moving a unit in each direction. Now, the subgroup generated by $T_8$ and $T_9$ is $\Z_3\oplus\Z_4\simeq\Z_{12}$, and since $G_3$ (restricted to the pitch segment) is generated by $\{p_{12}, T_8, T_9\}$, lemma~\ref{lem:second}    tells us that $G_3$ is isomorphic to the dihedral group $\D_{12}$. Therefore Figure~\ref{fig:small} is a geometrical representation of $G_3\bullet\langle0, 1, 9\rangle\cong(T/I)\bullet\langle0, 1, 9\rangle$\footnote{All these considerations should be possible (with little modification in lemmas~\ref{lem:first},~\ref{lem:second} and definitions~\ref{defin:abe},~\ref{defin:bee}) for pitch classes generated by translation and  $\tau I$ inversion, where $\tau\in S_n$ is a trasposition: the $(T/\tau I)$ group. For instance if $\tau=R$ is the retrograde then we will have the $RI$-chains. Geometrical representations with $RI$-chains involved are in the paper by Joseph Straus  \citep{str11}.}.

\subsection{Tetrachords}\label{parala:g4}
A similar  analysis for the group $G_4$ is more involved.  
This group is generated by $\{p_{12}, p_{13}, p_{14},$ $ p_{23}, p_{24}, p_{34}\}$. An alternative set of generators is $\{p_{12}, T^3_2, T^4_2, T^3_1,  T^4_1, T^{34}_{12}\}$. So, basically we have $\Z_2$ generated by $p_{12}$ and the abelian subgroup generated by $\{T^3_2, T^4_2, T^3_1,  T^4_1\}$. 

$T^{34}_{12}$ is ruled out since it is  the sum of $T^3_1$ and $T^4_2$: for these abelian operators which act   as translations the composition means sum of translations. 

Now, these generators share relations and we would like to find a linearly independent set of generators.


We immediately have the relation $T^3_2-T^4_2=T^3_1-T^4_1=T^3_4$. Therefore, we are left with three generators $\{T^3_1, T^3_2, T^4_1\}$ which in general are linearly independent. Indeed, let $\lambda_1, \lambda_2,$ $\lambda_3$ be numbers in $\Zt$ such that $\lambda_1 T^3_1+\lambda_2 T^3_2+\lambda_3 T^4_1=0$, that is $\lambda_1 (x_3-x_1)+\lambda_2 (x_3-x_2)+\lambda_3 (x_4-x_1)=0$ for any triple of $x_i$'s in $\Zt$. Clearly the only possible $\lambda_i$'s are equal to $0$. If however we work with the $(T/I)$ set generated by a fixed segment $\langle x_1, x_2, x_3, x_4\rangle$, then relations will appear and the group will be smaller (a subgroup of $\Z_2 \ltimes (\Zt \oplus \Zt \oplus \Zt)$).

We cannot map $G_4$ into a rotation group as in the case of $G_3$. Indeed, we have by formulas~\eqref{eq:second} and~\eqref{eq:four} that $p_{23}=\sigma^{-1}p_{12}\sigma$, $p_{34}=\sigma^{-2}p_{12}\sigma^2$,  $p_{14}=\sigma^{-3}p_{12}\sigma^3$, $p_{13}=\tau_{23}p_{12}\tau_{23}$, and $p_{24}=\sigma^{-1}p_{13}\sigma$. Namely, besides the cycle $\sigma$ and symmetry $p_{12}$ one has to introduce the transposition $\tau_{23}$. 

Instead, we consider the subgroup $\widetilde G_4$ generated by $\{p_{12}, p_{23}, p_{34}, p_{14}\}$ which maps into the group generated by $\{p_{12}, \sigma\}$. Representing these as matrix operators $p_{12}=\left(\begin{smallmatrix} 0&1&0&0\\1&0&0&0\\1&1&-1&0\\1&1&0&-1\end{smallmatrix}\right)$, $\sigma=\left(\begin{smallmatrix} 0&1&0&0\\0&0&1&0\\0&0&0&1\\1&0&0&0\end{smallmatrix}\right)$, one gets $z=\sigma p_{12}=\left(\begin{smallmatrix}1&0&0&0\\1&1&-1&0\\1&1&0&-1\\0&1&0&0\end{smallmatrix}\right)$, with $z^4=\I$. 

\begin{floatingfigure}[r]{0.63\textwidth}
 \vspace{3mm}
 \centering
    \includegraphics[width=0.60\textwidth]{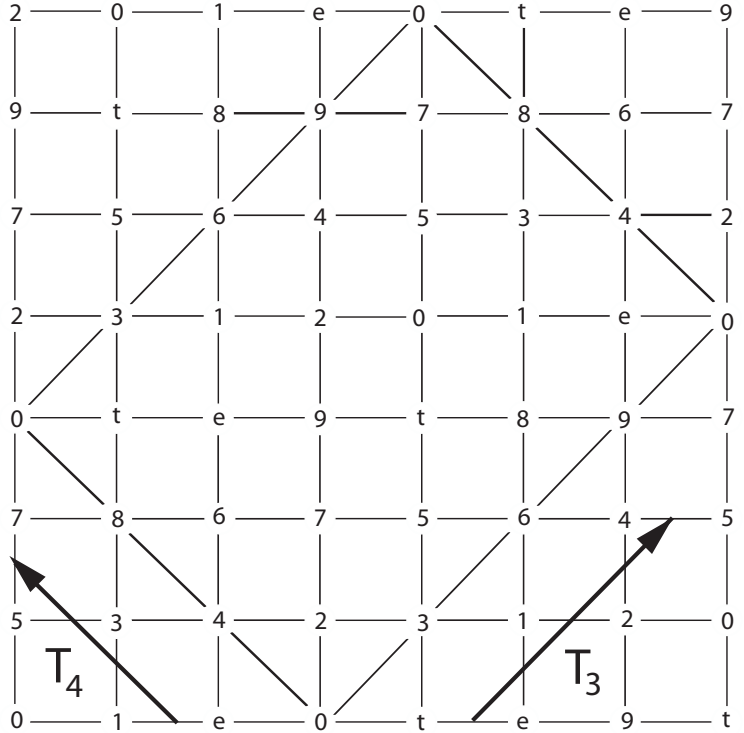}
  \vspace{-3.3cm}   
 \caption{minimal torus for pitch class $\langle0, 1, 3, 5\rangle$}\label{fig:tetra}
\end{floatingfigure}

This corresponds to the rotation group for a Tiling on the plane by squares. Here $\sigma$ is rotation around the square center and $z$ around each vertex. In the general case $\widetilde G_4$ is generated by $\{p_{12},$ $T^3_1, T^4_2\}$, with $T^3_1, T^4_2,$ linearly independent (namely $\widetilde G_4\cong\Z_2\ltimes(\Zt\oplus\Zt)$).

\ 

\noindent{\bf Example 2.}
Restricting to the pitch class $\langle0, 1, 3, 5\rangle$ we have the translations generating the abe-lian subgroup of $\widetilde G_4$: $T^3_1\equiv T_3$ and $T^4_2\equiv T_4$. This is $\Z_4\oplus\Z_3\simeq\Zt$. Figure~\ref{fig:tetra} shows the minimal torus which is generated by the action of  $\widetilde G_4$. Also in here, the whole set class $sc(0135)$ is obtained. The action along the sides of the square tiles by the generators of $\widetilde G_4$ is transversal to the action by $\{T_3, T_4\}$. Notice that these translations have real meaning in terms of  numbers and that they define where the Torus sits.

\ 
However, there may be degeneracies to this model. For instance, consider the following.

\setcounter{example}{2}
\begin{example}
The segment $\langle0, 3, 6, 9\rangle$ whose $(T/I)$ orbit possess several symmetries. Here, the action of $\widetilde G_4$ just repeats four squares and its abelian subgroup is generated by $(T_6,T_6)$. We get a torus out of eight square tiles but not the whole orbit $(T/I)\bullet$$\langle0, 3, 6, 9\rangle$ (see Figure~\ref{fig:failure}). Even the abelian subgroup of the full $G_4$ generated by $(T_6, T_3, T_9)$ would not give the whole $T/I$ orbit. In this case we get three connected components:  $\widetilde G_4\bullet$$\langle0, 3, 6, 9\rangle$, $\widetilde G_4\bullet$$\langle1, 4, 7, t\rangle$, and $\widetilde G_4\bullet$$\langle2, 5, 8, e\rangle$, whose union is the whole set class $sc(0369)$. 
\end{example}

{\centering{
\section{Triangle groups and Surfaces}\label{sec:triangle}
}}

It was said in  [\ref{parala:coxeter}] that the group generated by the contextual transformations $p_{ij}$'s is a quotient of a Coxeter group. We tie this with the Triangle groups that we consider as a special kind of Coxeter groups which have geometrical realizations as Tilings on surfaces.

\begin{definition} given three positive integers $h, k, l\geq2$, A triangle group $\Delta_g(h,k,l)$ is a group generated by three reflections $R_1, R_2, R_3$ satisfying the relations $R_1^2=R_2^2=R_3^2=(R_1 R_3)^h=(R_3 R_2)^k=(R_2 R_1)^l=\I$, and possibly other relations associated with a surface $S_g$. 
\end{definition} 
 \begin{floatingfigure}[r]{0.43\textwidth}
 \centering
    \includegraphics[width=0.40\textwidth]{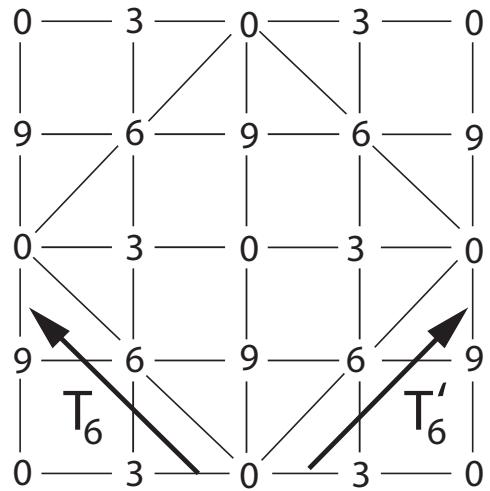}
 \caption{minimal torus for $\langle0, 3, 6, 9\rangle$}\label{fig:failure}
\end{floatingfigure}
These are groups of reflections of regular triangulations on surfaces, that is triangulations associated with Regular Tessellations. Any surface can be subdivided by triangles but not every surface contains a Regular Tessellation by polygons with a certain number of faces (problem $\{F,p,q\}$ cited in [\ref{parala:polygon}]). For instance, if a surface $S_g$ contains a Regular Tiling by squares\footnote{This only happens if the surface is a Torus or the plane.}, then each square is subdivided into triangles with vertices at the center and middle edges as shown in Figure~\ref{fig:square}. There is a fundamental triangle $T$ (yellow) and its mirror reflections (red, blue and green) about the edges. 
The composition of two of these reflections gives a rotation around one of the black hinges  (the vertices of the triangle). This system of rotations is called a rotation group (as mentioned in [\ref{parala:g3}]). 

\begin{floatingfigure}[r]{0.43\textwidth}
 \vspace{3mm}
 \centering
    \includegraphics[width=0.40\textwidth]{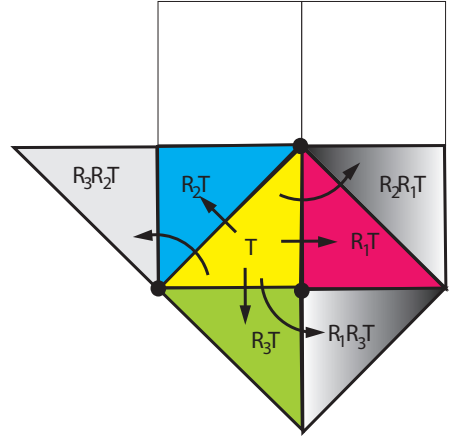}
 \caption{triangle motions for a square}\label{fig:square}
\end{floatingfigure}

The surface $S_g$ is covered by applying all possible sequences of reflections $R_i$'s to the triangle $T$ or by applying all possible sequences of rotations $\Rc=R_1R_3$, $\Sc=R_3R_2$ and $\Tc=R_2R_1$ (around the triangles  vertices) to the fundamental polygon\footnote{This assumes that the Tiling is already given and therefore all the vertices of the triangulation.}.

The group generated by $\{\Rc, \Sc, \Tc\}$ is an important subgroup of $\Delta_g(h,k,l)$ which has index $2$. It is called a von Dyck group and can be characterized as follows $\Delta^+_g(h,k,l)=\{\Rc, \Sc, \Tc, \text{ such that } \Rc^h=\Sc^k=\Tc^l=\Rc\Sc\Tc=\I$,  plus other relations depending on the surface $S_g\}$.

There is more structure to this: the fundamental triangle  $T$ associated with the triangle group $\Delta_g(h, k, l)$ has angles $\pi/h, \pi/k, \pi/l$. 

Spherical triangles satisfy $1/h+1/k+1/l>1$ and only a finite number of configurations\footnote{These configurations are related to the regular polyhedra [\ref{parala:polygon}].} are possible. Triangles in the plane or Torus satisfy  $1/h+1/k+1/l=1$ and  also a finite number of cases are possible. 
However, the most interesting cases correspond to hyperbolic triangles: those for which the inequality  $1/h+1/k+1/l<1$ holds.

The triangle groups related to the problem $\{F,$ $p, q\}$ are those of the form $\Delta_g(2,q,p)$\footnote{This is because when doing barycentric subdividion of a regular polygon we get right triangles and so $h=2$.}. Their corresponding von Dyck groups (we write them as $\Delta^+_g(2,q,p)$) can be  presented as generated by  $\Rc$ and $\Sc,$ one of them of order $2$. Indeed, writing $\Tc=\Sc^{-1}\Rc$, we can present the von Dyck  group as follows: $\Delta^+_g(2,q,p)=\{\Rc, \Sc, \text{ such that } \Rc^2=\Sc^p=(\Sc^{-1}\Rc)^q=\I,$ plus relations depending on the surface $S_g\}$.

We will be mainly concerned with orientable\footnote{Not those like the M\"obius band or the projective plane.} closed surfaces (that is without boundary). These surfaces are classified by their genus $g$ (an integer $g\geq0$). This is the number of holes they have or the number of handles attached to a sphere. For instance the genus of a sphere is $0$ and that of a torus is $1$. All the surfaces of genus $g>1$ fall into the Hyperbolic realm. Figure~\ref{fig:g4}  pictures a surface of genus $4$. If a surface $S_g$ of genus $g$ has a Tiling by convex\footnote{We exclude polygons like Star polygons because they may be misleading when computing the Euler-Poincar\'e characteristic. The safest way to compute the Euler-Poincar\'e characteristic is to use a triangulation on $S_g$.} polygons and $V=$number of vertices, $E=$number of edges, $F=$number of faces for this Tiling,  then the Euler-Poincar\'e Characteristic: $\chi(S_g):=V-E+F$ is an integer number which does not depend on the Tiling considered on $S_g$.  We have the following.
\begin{equation}
\chi(S_g)=V-E+F=2-2 g
\end{equation}

If a surface $S_g$ of genus $g>1$ has a Tiling by regular $p-$gons with incidence $q$ at each polygon vertex, then there is an induced barycentric Tessellation by hyperbolic right triangles with angles $\pi/p, \pi/q$. These triangles are in correspondence with the elements of the group $\Delta_g(2,p,q)$ and its order can be computed by the formula \citep[see][]{kawe96} 
\begin{align}
|\Delta_g(2,p,q)|&=\frac{\text{ (Hyperbolic Area of }S_g)}{\text{(Hyperbolic Area of the }(2, p, q)-\text{triangle})} =\\
&=\frac{-2 \pi \chi(S_g)}{(\frac{\pi}{2}-\frac{\pi}{p}-\frac{\pi}{q})}=\frac{8 p q (g-1)}{(p-2)(q-2)-4}\label{eq:chi}
\end{align}

\begin{remark}\label{rem:first} Formula (11) holds also for the sphere (just put $g=0$).
\end{remark}

\begin{remark}\label{rem:second}
If on a surface $S_g$ of genus $g$ we can solve the problem $\{F, p, q\}$, then by taking apart the polygons and counting edges we get $q V=2 E=p F$. Thus, $\chi(S_g)=2 E (\frac{1}{p}+\frac{1}{q}-\frac{1}{2})$. So, another way of writing Formula (11) is the following $|\Delta_g(2,p,q)|=4 E= 2 p F = 2 q V$.
\end{remark}

\begin{figure}
 \vspace{3mm}
 \centering
    \includegraphics[width=0.70\textwidth]{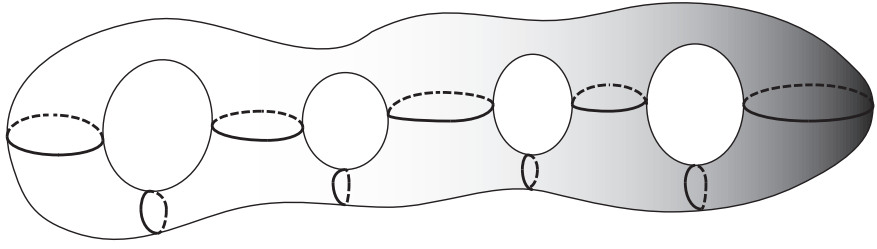}
 \caption{a genus $g=4$ surface}\label{fig:g4}
\end{figure}

\begin{remark}\label{rem:three} As long as we know the number of faces (edges or vertices) of a regular Tessellation of type $\{p, q\}$ on a surface $S_g$ we can compute its genus: 
\begin{equation}\label{eq:genusformula}
g=1-\frac{\chi(S_g)}{2}=1+\frac{1}{2}(\frac{1}{2}-\frac{1}{p}-\frac{1}{q}) p F.
\end{equation}
\end{remark}

\begin{remark}There is still another formula which allows us to compute the Euler-Poincar\'e characteristic in case we know the number of faces $F$ and vertices $V$ of a regular $\{p, q\}$ Tiling on $S_g$:
\begin{equation}
(p-2)(q-2)=4\left(1-\frac{\chi(S_g)}{V}\right)\left(1-\frac{\chi(S_g)}{F}\right)
\end{equation} 
\end{remark}

\begin{remark} The full triangle group $\Delta(2,p, q)$\footnote{This is Definition 4 without the relations for the surface $S_g$. } takes a fundamental right triangle with angles $\pi/p, \pi/q$ in the Poincar\'e Disk and moves it to cover the whole Disk giving a pattern that also contains p-gons where q polygons meet at each vertex\footnote{As shown in the picture \href{http://en.wikipedia.org/wiki/File:Order-3_heptakis_heptagonal_tiling.png}{\tt (2,3,7)-triangle Tiling}.}. One obtains such patterns as pictured in Escher Limit Circle drawings\footnote{See for instance \href{http://www.josleys.com/show_gallery.php?galid=325}{\tt http://www.josleys.com/show\_gallery.php?galid=325}.}. This group is infinite and for each surface $S_g$ containing a regular $\{p, q\}$ Tiling, the group $\Delta_g(2,p, q)$ is the quotient of $\Delta(2,p, q)$ by a subgroup containing at least the dihedral group preserving the p-gon.
\end{remark}

{\centering{
\section{Group structure and local incidence}\label{sec:incidence}
}}

Consider now  a pitch segment $\langle x_1, x_2, \dots, x_n\rangle$, $x_i\in \Zt$ for $0\leq i\leq n$ and assume that we allow the contextual transformations generated by $\{p_{12}, p_{23}, \dots, p_{i,i+1}, \dots, p_{n1}\}$. The group generated by these inversions is denoted $\widetilde G_n$. This group is contained in a bigger group $G_n$ generated by all the inversions $p_{ij}$'s. As in [\ref{parala:g4}] $\widetilde G_n$ is mapped into the group generated by $\{p_{12}, \sigma\}$ and later on we will show that this is a von Dyck group.

We describe the structure and basis of $G_n$ and $\widetilde G_n$.

\begin{proposition}\label{prop:strut}
The group of contextual inversions $G_n$ is isomorphic to the semidirect product of $\Z_2$ and the abelian group  $\Zt^{n-1}$. Bases for this last abelian group are given by
$$\{T^2_1, T^3_1, \dots, T^n_1\} \text{ or } \{T^2_1, T^3_2, \dots, T^n_2\}.$$

The group of contextual inversions $\widetilde G_n$ is isomorphic to $\Z_2\ltimes\Zt^{n-2}$ if $n$ is even and to $\Z_2\ltimes\Zt^{n-1}$ in case $n$ is odd. Bases for the abelian parts in these two cases are:
$$\{T^3_1, T^4_2, T^5_1, \dots, T^n_2\}\quad\text{ if }n\text{ is even,}$$
$$\{T^2_1, T^3_1, T^4_2, \dots, T^{n-1}_2, T^n_1\}\quad\text{ if }n\text{ is odd.}$$
\end{proposition}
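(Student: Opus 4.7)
The plan is to realize both groups as semidirect products $\Z_2\ltimes A$ with $A$ an abelian group built from the formal translations $T^{hk}_{ij}$, and then to select explicit bases for $A$. For $G_n$, let $A_n$ denote the subgroup generated by all the $T^{hk}_{ij}$; by Lemma~\ref{lem:second} these commute, and $p_{12}$ conjugates every element of $A_n$ to its inverse. The key observation will be that each contextual inversion splits as $p_{ij}=p_{12}\circ(p_{12}\circ p_{ij})=p_{12}\circ T^{ij}_{12}$, so every word in the $p_{ij}$'s reduces: pair consecutive factors into $T^{hk}_{ij}$'s using Lemma~\ref{lem:first} and then push any unpaired $p_{12}$ to the front via $p_{12}\circ T=T^{-1}\circ p_{12}$. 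This gives $G_n=\langle p_{12}\rangle\cdot A_n$, and since every element of $A_n$ acts on a generic pitch vector as a pure translation while $p_{12}$ acts as an inversion, $\langle p_{12}\rangle\cap A_n=\{\I\}$ and the product is semidirect.

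To identify $A_n$ with $\Zt^{n-1}$ on the basis $\{T^2_1,\dots,T^n_1\}$, I would combine the identity $T^{hk}_{ij}=T^h_i\circ T^k_j$ from Lemma~\ref{lem:second} with the elementary relation $T^k_j=T^k_1\circ(T^j_1)^{-1}$, which follows from $x_k-x_j=(x_k-x_1)-(x_j-x_1)$, to express every $T^{hk}_{ij}$ in the proposed generators. Independence is established by noting that $\prod_k(T^k_1)^{a_k}$ acts on a generic pitch vector as translation by $\sum_k a_k(x_k-x_1)$, and this vanishes identically in the $x_i$ only if $a_k\equiv 0\pmod{12}$ for every $k$. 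The alternative basis $\{T^2_1,T^3_2,\dots,T^n_2\}$ is obtained from the first by the change of coordinates $T^k_2=T^k_1\circ(T^2_1)^{-1}$, whose transition matrix is unipotent upper-triangular and hence invertible over $\Zt$.

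For $\widetilde G_n$ the semidirect-product reasoning is the same, but now the abelian subgroup $\widetilde A_n$ is generated only by the cyclic products $T^{i,i+1}_{12}$ for $i=2,\dots,n-1$ together with $T^{n1}_{12}$. Writing $e_k:=T^k_1$ with the convention $e_1=\I$, these generators become $e_i+e_{i+1}-e_2$ and $e_n-e_2$ respectively. I would apply the iterated row reduction $h_i:=g_i-h_{i-1}$ starting from $h_1=g_1=e_3$; this alternates between producing a single basis vector $e_{i+2}$ (when $i$ is odd) and a shifted vector $e_{i+2}-e_2$ (when $i$ is even), and the last reduced vector $h_{n-2}$ is then compared against the final generator $T^{n1}_{12}=e_n-e_2$.

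The hard part, and the source of the parity dichotomy in the statement, is precisely this comparison. When $n$ is even, $h_{n-2}=e_n-e_2$ coincides with $T^{n1}_{12}$, so the latter is redundant: $\widetilde A_n$ has rank $n-2$ with reduced basis $\{e_3,\,e_4-e_2,\,e_5,\dots,e_n-e_2\}=\{T^3_1,T^4_2,\dots,T^n_2\}$, and $e_2=T^2_1$ is genuinely absent, confirming $\widetilde A_n\subsetneq A_n$. When $n$ is odd, $h_{n-2}=e_n$, so $T^{n1}_{12}-h_{n-2}=-e_2$ recovers the missing generator and $\widetilde A_n=A_n\cong\Zt^{n-1}$; the proposed basis $\{T^2_1,T^3_1,T^4_2,\dots,T^{n-1}_2,T^n_1\}$ is then related to $\{e_2,\dots,e_n\}$ by a unipotent change of basis, which I would verify by writing out the transition matrix explicitly.
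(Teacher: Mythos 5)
Your proposal is correct and follows essentially the same route as the paper's proof: both rewrite each generator as $p_{12}$ times an abelian translation $T^{ij}_{12}$, reduce the resulting generating set by the telescoping subtractions $T^{i,i+1}_{12}-T^{i-1,i}_{12}$, and read off the parity dichotomy from whether the final cyclic generator $T^{1n}_{12}$ closes the chain or supplies the missing $T^2_1$. Your version merely makes explicit a few steps the paper leaves implicit (the semidirect-product verification, the generic-independence argument, and the check that $T^2_1$ lies outside the abelian part when $n$ is even), but the underlying argument is the same.
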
 

\begin{proof} $G_n$ is generated by $\{p_{12}, p_{13}, \dots, p_{1n}, p_{23}, p_{24}, \dots, p_{2n}, p_{34}, \dots, p_{3n}, \dots, p_{n-1,n}\}$ and also by $p_{12}$ and the abelian generators $\{T^3_2, \dots, T^n_2, T^3_1, \dots, T^n_1, T^{34}_{12}, \dots, T^{3n}_{12}, \dots, T^{n-1,n}_{12}\}$. Since we have in additive notation $T^{hk}_{ij}=T^h_i+T^k_j$ (see Lemma~\ref{lem:second}), the set $\{T^3_2, \dots, T^n_1, T^3_1, \dots, T^n_1\}$ already generates the abelian piece. Now, since $T^3_1-T^3_2=\dots=T^n_1-T^n_2=T^2_1$, we are left with the generators $\{T^2_1, T^3_1, \dots, T^n_1\}$ which are readily seen to be generically independent. The remaining conclusions about $G_n$ follow from Lemma~\ref{lem:second}. 

$\widetilde G_n$ is generated by $\{p_{12}, p_{23}, p_{34}, \dots, p_{n-1,n}, p_{1n}\}$. So, $p_{12}$ and the abelian piece generated by $\{T^3_1, T^{34}_{12}, \dots, T^{n-1,n}_{12}, T^n_2\}$ define the whole group. Suppose that $n=2 k$ and write these generators as $\{T^3_1, T^3_1+T^4_2, T^4_2+T^5_1, \dots, T^{2 k-1}_1+T^{2 k}_2, T^{2 k}_2\}$. Then, an equivalent set of generators is $\{T^3_1,T^4_2, T^5_1, T^6_2, \dots, T^{2k-1}_1, T^{2k}_2\}$, and these are $n-2$ independent generators. Therefore, $\widetilde G_n\cong\Z_2\ltimes\Zt^{n-2}$.

If $n=2k+1$ the generators of the abelian piece are $\{T^3_1, T^3_1+T^4_2, T^4_2+T^5_1, \dots, T^{2 k-1}_1+T^{2 k}_2, T^{2k}_2+T^{2k+1}_1, T^{2 k+1}_2\}$. We can replace this set with $\{T^3_1, T^4_2, \dots, T^{2k}_2, T^{2k+1}_1, T^{2k+1}_2\}$ or the equivalent set $\{T^2_1, T^3_1, T^4_2, \dots, T^{2k}_2, T^{2k+1}_1\}$. These generators are equivalent to any of the given bases for the abelian part of $G_{2 k+1}$. So, in this case $\widetilde G_{2k+1}=G_{2k+1}$.
\end{proof}

What is the image of a single pitch segment class $\langle x_1, x_2, \dots, x_n\rangle$ under $\widetilde G_n$? Do we recover the whole set class $(T/I)\bullet$$\langle x_1, x_2, \dots, x_n\rangle$? Obviously $\widetilde G_n\bullet$$\langle x_1, x_2, \dots, x_n\rangle$ is contained in $(T/I)\bullet$$\langle x_1, x_2, \dots, x_n\rangle$ because $p_{ij}$'s are made up of translations $T_m$'s and inversion $I$.

If $n$ is odd a condition for $\widetilde G_n\bullet$$\langle x_1, x_2, \dots, x_n\rangle$ to equal $(T/I)\bullet$$\langle x_1, x_2, \dots, x_n\rangle$ is the following:

\begin{condition}\label{cond:first} There exist integer numbers $\lambda_2, \lambda_3, \dots, \lambda_n,$ such that 
$$\lambda_2 (x_2-x_1)+\lambda_3 (x_3-x_1)+\dots+\lambda_n (x_n-x_1)\equiv1\text{ mod  }12.$$ 
\end{condition} 

This makes sure that  by applying $T_1\in\widetilde G_n$. We use the equality deduced  in Proposition 1 $\widetilde G_n=G_n$ and the first basis of $G_n$. The operator $Op=(T^2_1)^{\lambda_2}\circ(T^3_1)^{\lambda_3}\circ\dots\circ(T^n_1)^{\lambda_n}$ is translation by $1$ when applied to the segment $\langle x_1, x_2, \dots, x_n\rangle$. Now, the operator $p_{12}\circ(Op)^{(-x_1-x_2)}$ applies like $I$ on the segment.

In case $n$ is even we use 

\begin{condition}There are integer numbers $\lambda_3, \lambda_4, \dots, \lambda_n,$ such that 
$$\lambda_3 (x_3-x_1)+\lambda_4 (x_4-x_2)+\lambda_5 (x_5-x_1)+\dots+\lambda_{n-1} (x_{n-1}-x_1)+\lambda_n (x_n-x_2)\equiv1\text{ mod  }12.$$ 
\end{condition}

The proof that this suffices is similar to that of Condition 1. We observe that since  there are many bases for $\widetilde G_n$,  other conditions are possible to obtain $\widetilde G_n\bullet$$\langle x_1, x_2, \dots, x_n\rangle$$=$ $(T/I)\bullet$ $\langle x_1, x_2, \dots, x_n\rangle$.

As a counterexample, the pitch segment $\langle0,2,4,6,8\rangle$ does not satisfy Condition 1. Either we treat it as a pathological case, or we deal with it as a particular case, aside from the considerations we are explaining in this article.

\subsection{The gluing procedure}\label{parala:procedure}
We map $\widetilde G_n$ into the group generated by $\{p_{12}, \sigma\}$ as follows (using Formula~\eqref{eq:second}): 

\begin{floatingfigure}[r]{0.27\textwidth}
 \centering
    \includegraphics[width=0.24\textwidth]{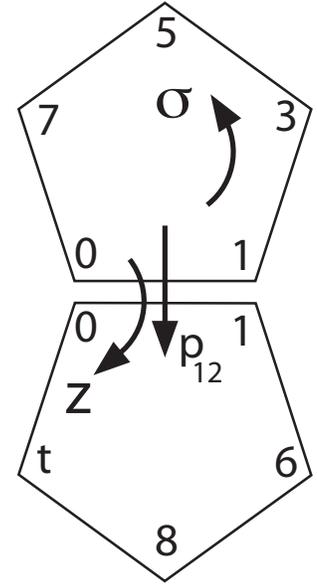}
 \caption{matching a pentagon}\label{fig:pentagon}
 \vspace{3mm}
\end{floatingfigure}

$p_{23}=\sigma^{-1}p_{12}\sigma,$  $p_{34}=\sigma^{-2}p_{12}\sigma^2, \dots, p_{1n}=\sigma^{-(n-1)}p_{12}\sigma^{n-1}.$ As we did in [\ref{parala:g3}] for triangles,  we consider the regular polygon of $n$ sides $P_n$ associated to the segment $\langle x_1, x_2, \dots, x_n\rangle$, and glue it to the polygon associated with $\sigma p_{12}\langle x_1, x_2, \dots, x_n\rangle$$=$ $\langle x_1,$ $I_{x_1+x_2}(x_3), \dots, I_{x_1+x_2}(x_n), x_2\rangle$ along the side $\langle x_1, x_2\rangle$ (Figure~\ref{fig:pentagon}). Namely, the element $z=\sigma p_{12}$ is a rotation around the vertex $x_1$ that takes one polygon into the next and $\sigma^{-1}$ is rotation of the polygon vertices.

The order of $z$ is the incidence at each vertex. For instance,  by successively rotating with $z$   the polygon of Figure~\ref{fig:pentagon} we get back the pitch $\langle0,1,3,5,7\rangle$ after $10$ times: 
$\langle0,1,3,5,7\rangle\xrightarrow{z}$ $\langle0,t,8,$ $6,1\rangle\xrightarrow{z}\langle0,2,4,9,t\rangle\xrightarrow{z}\langle0,t,5,4,2\rangle\xrightarrow{z}\langle0,5,6,8,t\rangle\xrightarrow{z}\langle0,e,9,7,5\rangle$ $\xrightarrow{z}\langle0,2,4,6,e\rangle\xrightarrow{z}\langle0,t,8,3,2\rangle\xrightarrow{z}\langle0,2,7,8,t\rangle\xrightarrow{z}\langle0,7,6,4,2\rangle\xrightarrow{z}\langle0,1,3,5,7\rangle$. 

We can picture this in the Poincar\'e Disk with $10$ meeting pentagons (see Figure~\ref{fig:ten}).

Transporting these prescriptions along all sides and vertices we  get a surface which is regularly tessellated by equal polygons of $n$ sides. If the Tiling is finite the surface is closed since each edge belongs to two and only two faces (there are no free edges belonging to only one polygon) and each vertex has the same number of incident edges (this is the order of $z$). Thus the group generated by $\{p_{12}, \sigma^{-1}\}$ is a von Dyck group of a regular Tessellation  on a surface of genus $g$ since it can be described by generators and relations as $\{ p_{12}, \sigma^{-1}, \text{ such that } p_{12}^2=\sigma^n=(\sigma p_{12})^q=\I$ plus relations coming from $S_g\}$.

\begin{proposition} Assume the pitch segment $\langle x_1, x_2, \dots, x_n\rangle$ induces a Regular Tessellation by regular polygons of $n$ sides on a closed surface of genus $g$. Then, the incidence of the polygons at each vertex is $n$  for even $n$ and $2 n$ if $n$ is odd. Therefore,  the group generated by $p_{12}$ and $\sigma$ is isomorphic to $\Delta^+_g(2,n,n)$ if $n$ is even and $\Delta^+_g(2,n,2 n)$ if $n$ is odd.
\end{proposition}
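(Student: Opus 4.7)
The plan is to identify the incidence $q$ of polygons at each vertex as the order of the rotation $z = \sigma p_{12}$, and then recognize the resulting presentation as that of a von Dyck group. First I would note that, by the gluing procedure of [\ref{parala:procedure}], $z$ fixes the vertex labelled $x_1$ and carries each polygon of the tessellation into the next one meeting at that vertex; hence the incidence $q$ equals the order of $z$. A direct computation from the definitions gives $z\langle x_1,\ldots,x_n\rangle = \langle x_1, x_1+x_2-x_3, x_1+x_2-x_4, \ldots, x_1+x_2-x_n, x_2\rangle$. Since the first coordinate is preserved, I pass to the relative coordinates $u_i := x_{i+1}-x_1$ for $1 \leq i \leq n-1$, obtaining an $(n-1) \times (n-1)$ matrix $M$ over $\Zt$ whose columns satisfy $M e_1 = e_1 + e_2 + \cdots + e_{n-1}$ and $M e_k = -e_{k-1}$ for $2 \leq k \leq n-1$.

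Next, I would compute $M^n$ by iterating the shift relation $M e_k = -e_{k-1}$ along the orbit of $e_{n-1}$: this gives $M^j e_{n-1} = (-1)^j e_{n-1-j}$ for $0 \leq j \leq n-2$, hence $M^{n-1} e_{n-1} = (-1)^{n-2}(e_1+\cdots+e_{n-1})$. The key cancellation $M(e_1+\cdots+e_{n-1}) = (e_1+\cdots+e_{n-1}) - (e_1+\cdots+e_{n-2}) = e_{n-1}$ then yields $M^n e_{n-1} = (-1)^n e_{n-1}$, and applying $M e_k = -e_{k-1}$ propagates this identity from $e_{n-1}$ to every basis vector. Hence $M^n = (-1)^n \I$ on all of $\Zt^{n-1}$.

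Finally, I would extract the order of $z$. For $n$ even, $M^n = \I$, and the minimality check is immediate: $M^d e_{n-1} = (-1)^d e_{n-1-d} \neq e_{n-1}$ for $1 \leq d \leq n-2$, and $M^{n-1} e_{n-1}$ is a full signed sum of basis vectors, also distinct from $e_{n-1}$. For $n$ odd, $M^n = -\I \neq \I$ but $M^{2n} = \I$; a similar inspection of the divisors of $2n$ (noting that $2n$ has no divisor strictly between $n$ and $2n$) rules out any smaller order. Thus the incidence $q$ equals $n$ for even $n$ and $2n$ for odd $n$. Combined with the relations $p_{12}^2 = \I$, $\sigma^n = \I$, $(\sigma p_{12})^q = \I$, together with any additional relations the closed surface $S_g$ imposes, this is exactly the presentation of the von Dyck group $\Delta^+_g(2, n, q)$, giving the claimed isomorphisms.

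The main obstacle I anticipate is the passage from the single-vector identity $M^n e_{n-1} = (-1)^n e_{n-1}$ to the matrix identity $M^n = (-1)^n \I$, which depends on the nontrivial cancellation $M(e_1+\cdots+e_{n-1}) = e_{n-1}$. This interplay between the ``translation'' column $M e_1$ and the ``shift'' columns $M e_k$ for $k \geq 2$ is also what forces the dichotomy between even and odd $n$.
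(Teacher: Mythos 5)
Your proof is correct, but it takes a genuinely different route from the paper's. Both arguments reduce the incidence claim to computing the order of the vertex rotation $z=\sigma p_{12}$, but where you linearize --- passing to the relative coordinates $u_i=x_{i+1}-x_1$ and showing by a shift-and-cancel computation that the resulting matrix satisfies $M^n=(-1)^n\I$ --- the paper stays entirely inside the relation calculus of Lemmas~\ref{lem:first} and~\ref{lem:second}: it writes $z^{-n}=(p_{12}\sigma^{-1})^n=p_{12}p_{23}p_{34}\cdots p_{n-1,n}p_{1n}$, converts consecutive pairs $p_{ij}p_{hk}$ into commuting translations $T^{hk}_{ij}$, and observes that these telescope to $\I$ when $n$ is even, while for $n=2k+1$ a single factor $p_{1,2k+1}$ is left over, so that $z^{-n}=T^{2k+1}_1p_{1,2k+1}$ and only $z^{-2n}=\I$; your sign $(-1)^n$ is exactly this leftover inversion in matrix form. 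Your version buys one thing the paper's does not: an explicit minimality check ($M^d\neq\I$ for every proper divisor $d$ of $n$, resp.\ of $2n$), whereas the paper only shows that the order of $z$ divides $n$ (resp.\ divides $2n$ but not $n$), leaving the exact value --- which the incidence statement requires --- implicit. The paper's version buys economy, deriving the proposition as a near-corollary of machinery already built for the group-structure results of Section~\ref{sec:incidence}. Both computations are generic in the $x_i$, so both rely equally on the hypothesis that the segment actually induces a Regular Tessellation in order to exclude degenerate orbits on which the order of $z$ could drop.
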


\begin{proof} It is enough to show that $q=n$ if $n$ is even and $q=2 n$ if n is odd. We have  that $p_{12}p_{23}\dots p_{1n}$$=$$p_{12}\sigma^{-1}p_{12}\sigma^{-1}\dots p_{12}\sigma^{-1}=(\sigma p_{12})^{-n}=z^{-n}$. On the other hand, $p_{12}p_{23}p_{34}p_{45}\dots$ $p_{n-1,n}p_{1n}$$=$$T^3_1p_{12}T^{34}_{12}p_{12}T^{45}_{12}\dots p_{12}T^{n-1,n}_{12}p_{12}T^n_2$. If $n=2 k$ we write $z^{-n}=T^3_1p_{12}T^{34}_{12}p_{12}T^{45}_{12}\dots$ $p_{12}T^{2k-1,2k}_{12}p_{12}T^{2k}_2=T^3_1T^{12}_{34}T^{45}_{12}\dots T^{12}_{2k-1,2k}T^{2k}_2=T^3_1T^1_3T^2_4T^4_2T^5_1\dots T^1_{2k-1}T^2_{2k}T^{2k}_2=\I$. This shows $z^n=\I$ for $n$ even. If $n=2k+1$, $z^{-n}=T^3_1T^{12}_{34}T^{45}_{12}\dots T^{12}_{2k-1,2k}T^{2k,2k+1}_{12}p_{1,2k+1}=T^{2k+1}_1p_{1,2k+1}$. However, by Lemma~\ref{lem:second} $z^{-2n}=T^{2k+1}_1p_{1,2k+1}T^{2k+1}_1p_{1,2k+1}=T^{2k+1}_1T^1_{2k+1}=\I$. This shows the proposition.
\end{proof}

\subsection{Mapping $\widetilde G_n$}\label{parala:normal}
Let us call by $\Nc$ the image of $\widetilde G_n$ into the von Dyck group. 
Then it is readily seen that this group is normal. Indeed, $p_{12}\Nc p_{12}\subseteq \Nc$ and $\sigma \Nc\sigma^{-1}\subseteq \Nc$. We just check this on the generating elements of $\Nc$. Therefore we can make sense of the quotient group $\Delta^+_g(2,n,n)/\Nc$ ($n$ even) and the quotient group $\Delta^+_g(2,n,$ $2n)/\Nc$ ($n$ odd). These quotients are isomorphic to $\Z_m=\langle\widehat \sigma$ plus relations induced by $S_g\rangle$ where $m$ divides $n$. Namely, in both cases $\Nc$ is a normal subgroup of index $m$ in a von Dyck group. On the other hand we have the exact sequence of groups $1\rightarrow Ab\rightarrow \widetilde G_n \rightarrow \Nc \rightarrow 1$, where $Ab$ is an abelian subgroup of $\widetilde G_n$.  That is $Ab$ is a subgroup of $\Zt^{n-2}$ for $n$ even and a subgroup of $\Zt^{n-1}$ for $n$ odd. 

We give an example to see how these groups look like.

\begin{example}\label{exa:four} Consider the pitch segment $\langle0,4,7, t, 2\rangle$ and use the basis of Proposition 1. 
\begin{floatingfigure}[r]{0.53\textwidth}
 \centering
    \includegraphics[width=0.50\textwidth]{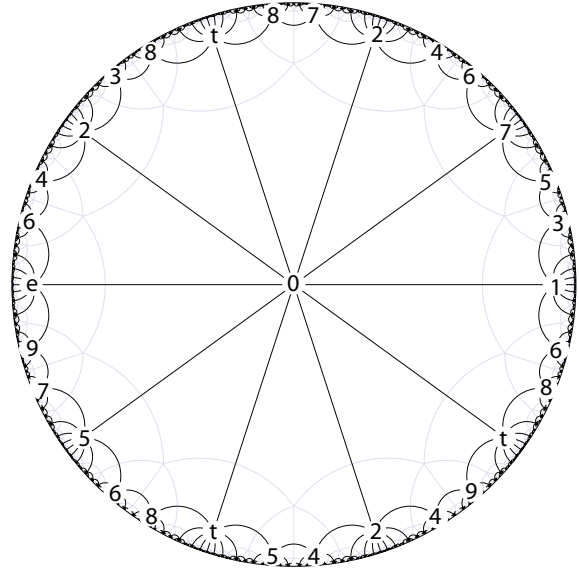}
 \caption{ten pentagons incidence}\label{fig:ten}
\end{floatingfigure}
We see that the abelian part of $\widetilde G_5$ is generated by the translations in four different directions $T^2_1\equiv T_4$, $T^3_1\equiv T_7$, $T^4_1\equiv T_{10}$, and $T^5_1\equiv T_2$. In this case, any element of $\widetilde G_5$ is written in the form $p_{12}T_4^{\lambda_1}T_7^{\lambda_2}T_{10}^{\lambda_3}T_2^{\lambda_4}$, where the $\lambda_i$'s are integer exponents.  Condition 1 is readily checked and therefore the image $\Nc$ of $\widetilde G_5$ is isomorphic to the dihedral group $\D_{12}$. Indeed, the operator $T^j_i$ is written as $T^j_i=\I+C_j-C_i$ where $C_i$ is a $5\times 5$ matrix with zero entries everywhere except in he $i$-th column  where all entries are $1$. One checks that $(T^j_i)^{\lambda}=\I+\lambda (C_j-C_i)$ and these matrices go down to $\I + \lambda T_{x_j-x_i}$ as they are restricted to the segment $(T/I)$-orbit. $\D_{12}$ is the group generated by two letters $s,t$ with relations $s^{12}=t^2=1,$ $tst=s^{-1}$, and the isomorphism with $\Nc$ is produced by sending $p_{12}\rightarrow t$, $T_1\rightarrow s$. Since $\Nc\simeq\D_{12}$ and has index $5$ into $\Delta_g^+(2,5,10)$, then by Formula~\eqref{eq:chi} one gets $24\times5\times2=|\Delta_g(2,5,10)|=20 (g-1)$. Thus, a surface containing the group generated by $p_{12}$ and $\sigma$ must have genus $13$. However this is not the surface containing an action of the whole group $\widetilde G_5$ acting on the orbit. For this we have to compute the kernel of the morphism $\varphi:\Z^4\rightarrow \Zt$ defined as $\varphi(\lambda_1,\lambda_2,\lambda_3,\lambda_4)=4\lambda_1+7\lambda_2+10\lambda_3+2\lambda_4$ so that we get the isomorphism $Ab\simeq \Z^4/Ker \varphi$. To determine $Ker \varphi$ we notice that it is generated by the vectors $\{e_1=(0,10,0,1), e_2=(0,2,1,0), e_3=(1,8,0,0), e_4=(0,12,0,0), e_5=(3,0,0,0), e_6=(0,0,6,0),e_7=(0,0,0,6)\}$. Also $\{e_1,e_2,e_3,e_4\}$ generates $Ker \varphi$ and is an independent basis for it. Any element in $\Z^4$ can be written in terms of this basis of $Ker \varphi$ as follows $(\mu_1,\mu_2,\mu_3,\mu_4)= \mu_1 e_3+\mu_3 e_2 +\mu_4 e_1 + \frac{1}{12}(\mu_2-8\mu_1-2\mu_3-10\mu_4) e_5$. Clearly, we obtain $Ab\simeq \Z^4/Ker \varphi\simeq\Zt$. 
\end{example} 

Consider a genus $g$ closed surface $S_g$ on which we have a Tessellation of type $\{n, n\}$ for $n$ even or $\{n, 2n\}$ if $n$ is odd. Moreover, let us assume the set of faces  contain at least the $24$ faces associated with the $(T/I)-$forms of a given pitch class segment $\langle x_1, x_2, \dots x_n\rangle$. That is, $F=24N$. What would the Euler-Poincar\'e characteristic and the minimal possible genus of $S_g$ be? This problem depends on the possibility of embedding a Graph on a surface of genus $g$. A topological connected Graph (i.e. points joined by arc segments) can always be embedded into a surface of some genus $g$. Even into a non-orientable surface. However determining the minimum embedding genus $g$ is a very difficult problem that belongs to Topological Graph Theory \citep[see][chap. 7]{han04}, \citep[also][]{grotu01}.

Putting $p=q=n$ and $F=24N$ in Formula~\eqref{eq:genusformula} we get $g=1+6 (n-4) N$ for $n$ even. In the $n$ odd case $p=n, q=2n$ and we get $g=1+6 (n-3) N$. 

In Table~\ref{tab:incidence} we shows for small $n$ the Tessellation type on $S_g$ for the corresponding pitch class segment and what could the minimal genus be if a Tessellation of that type is found on $S_g$. Warning: the genera shown in the table only reflect the formulas obtained. This does not mean that you would get a Tiling representing the pitch set class $sc(x_1x_2\dots x_n)$ with the minimal genus displayed in the table. The only certain values for this hold in genus $1$.

\begin{table}[th]
\begin{center}
\caption{Incidence and possible genera for small pitch class segments}\label{tab:incidence}
\vspace{3mm}
\begin{tabular}{|c|c|c|c|c|} 
\hline  $n$ & pitch segment & Tiling type & genus formula &  possible values\\

\hline   3 & $\langle x_1, x_2, x_3\rangle$ & $\{3,6\}$ & $g=1$ & 1\\
   4 & $\langle x_1, x_2, x_3, x_4\rangle$ & $\{4,4\}$ & $g=1$ & 1\\
   5 & $\langle x_1, x_2, x_3, x_4, x_5\rangle$ & $\{5,10\}$ & $g=1+12 N$ & 13, 25, 37, 49, 61\\

   6 & $\langle x_1, x_2, x_3, x_4, x_5, x_6\rangle$ & $\{6,6\}$ & $g=1+12 N$ &13, 25, 37, 49, 61\\

7 & $\langle x_1, x_2, x_3, x_4, x_5, x_6, x_7\rangle$ & $\{7,14\}$ & $g=1+24 N$ & 25,49, 73, \dots\\

8 & $\langle x_1, x_2, x_3, x_4, x_5, x_6, x_7, x_8\rangle$ & $\{8,8\}$ & $g=1+24 N$ & 25,49, 73, \dots\\

9 & $\langle x_1, x_2, x_3, x_4, x_5, x_6, x_7, x_8, x_9\rangle$ & $\{9,18\}$ & $g=1+36 N$ & 37, 73, \dots\\

10 & $\langle x_1, x_2, x_3, x_4, x_5, x_6, x_7, x_8, x_9, x_{10}\rangle$ &$\{10,10\}$ & $g=1+36 N$ & 37, 73, \dots\\

11 & $\langle x_1, x_2, x_3, x_4, x_5, x_6, x_7, x_8, x_9, x_{10}, x_{11}\rangle$ &$\{11,22\}$ & $g=1+48 N$ & 49, \dots\\

12 & $\langle x_1, x_2, x_3, x_4, x_5, x_6, x_7, x_8, x_9, x_{10}, x_{11}, x_{12}\rangle$ & $\{12,12\}$ & $g=1+48 N$ &49, \dots\\

\hline
\end{tabular}
\end{center}
\end{table}

\begin{remark} A generic pitch class segment holds a very high genus. Indeed, suppose we can view the group of inversions $\widetilde G_n$ as a group of automorphisms on a surface of genus $g$, then by a famous theorem of Hurwitz, the order of $\widetilde G_n$ is bounded above by $84(g-1)$. Thus, in the case of $n$ odd Proposition~\ref{prop:strut} would tell us that $g\geq1+\frac{|\Z_2\ltimes \Zt^{n-1}|}{84}=1+\frac{2\times12^{n-1}}{84}$. This is $g\geq495$ if $n=5$. However, as seen in Example~\ref{exa:four}$,\widetilde G_5$ cuts down to a group of order $2\times12^2$ for the pitch segment $\langle0,4,7,t,2\rangle$, i.e. $g\geq1+\frac{2\times12^2}{84}\geq5$. Latter we will see that this example holds a much higher genus.
\end{remark}

{\centering{
\section{Minimal model surface and pentachords}\label{sec:surface}
}}

\subsection{A Tiling containing 24 pentagons}\label{parala:icosa}
In a similar way as did in Example~\ref{exa:four} we will work with a fixed pitch segment $\langle x_1,x_2,x_3,$ $x_4,x_5\rangle$ and its $(T/I)$ orbit. We assume that this pitch segment satisfies Condition~\ref{cond:first}. 

\begin{floatingfigure}[r]{0.43\textwidth}
 \centering
    \includegraphics[width=0.40\textwidth]{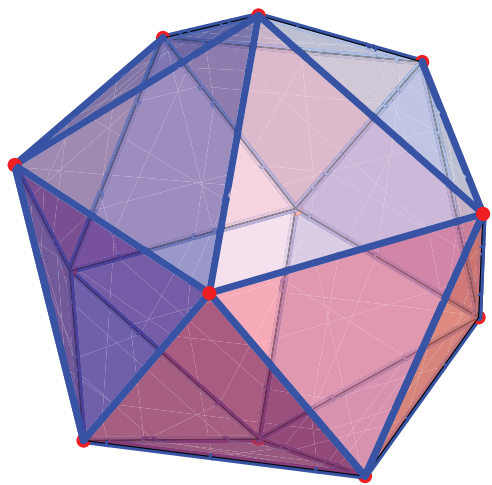}
 \caption
 {icosahedron}\label{fig:icosahedron}
\end{floatingfigure}

Then the image $\Nc$ of $\widetilde G_5$ into the group generated by  $\{p_{12}, \sigma\}$ is the dihedral group $\D_{12}$. If a Tiling by pentagons with vertex incidence $10$ exists on a genus $g$ surface $S_g$, then the von Dyck group $\Delta^+_g(2,5,10)$ contains $\Nc$ with index $5$. The same computation we did in Example~\ref{exa:four} holds and shows that the genus is $g=13$. 

The problem here is the existence of such a Tiling on a surface of genus $13$. Fortunately, such a Tiling exists and it is related to a surface already studied by Felix Klein \citep{kle84} in connection with the solutions of the quintic equation: the Bring surface\footnote{In the literature is called Bring curve. Here we use the dictionary ``algebraic complete curves over the complex numbers'' $\equiv$ ``closed smooth surfaces with complex structure'' (or Riemann surfaces, in honor of the mathematician Bernhard Riemann).}. The Bring surface is a surface of genus $4$ and can be viewed as a triple branched cover of the icosahedron 
 \citep[see][for such matters]{we05}. The Bring surface possess a Regular Tessellation of type $\{5,5\}$ consisting of twelve hyperbolic pentagons whose centers map to the vertices of the the sphere with icosahedral Tessellation (an inflated icosahedral  balloon framed at its vertices). The $12$ vertices of the pentagons are branched (with branch index $2$) over the same vertices of the Tessellated sphere (each one fifth $(2\pi/5,2\pi/10,2\pi/10)$-hyperbolic triangle of a hyperbolic pentagon maps onto a $(2\pi/5,2\pi/5,2\pi/5)$-spherical triangle of the icosahedral Tessellation of the sphere). This Tessellation was explained in W. Threlfall book \citep{thre32,we05} and can be viewed in Figure~\ref{fig:Bring}\footnote{This is essentially a picture due to Threlfall with some modifications by Weber.} as its image into the Poincar\'e Disk.

The pentagonal tiles $P_i$'s are numbered $1$ to $12$ with repeated tiles in the Figure since we are in the Poincar\'e Disk. The vertices are the red points labeled by $\{\text{a, b, c, d, e, f, g, h, i,}$ $\text{j, k, m}\}$. The fundamental domain of the surface is the union of the $10$ quadrilaterals incident at the center of $P_1$ and forming a regular icosagon (violet dashed boundary) with  identification of sides.

\begin{figure}
 \centering
    \includegraphics[width=\textwidth]{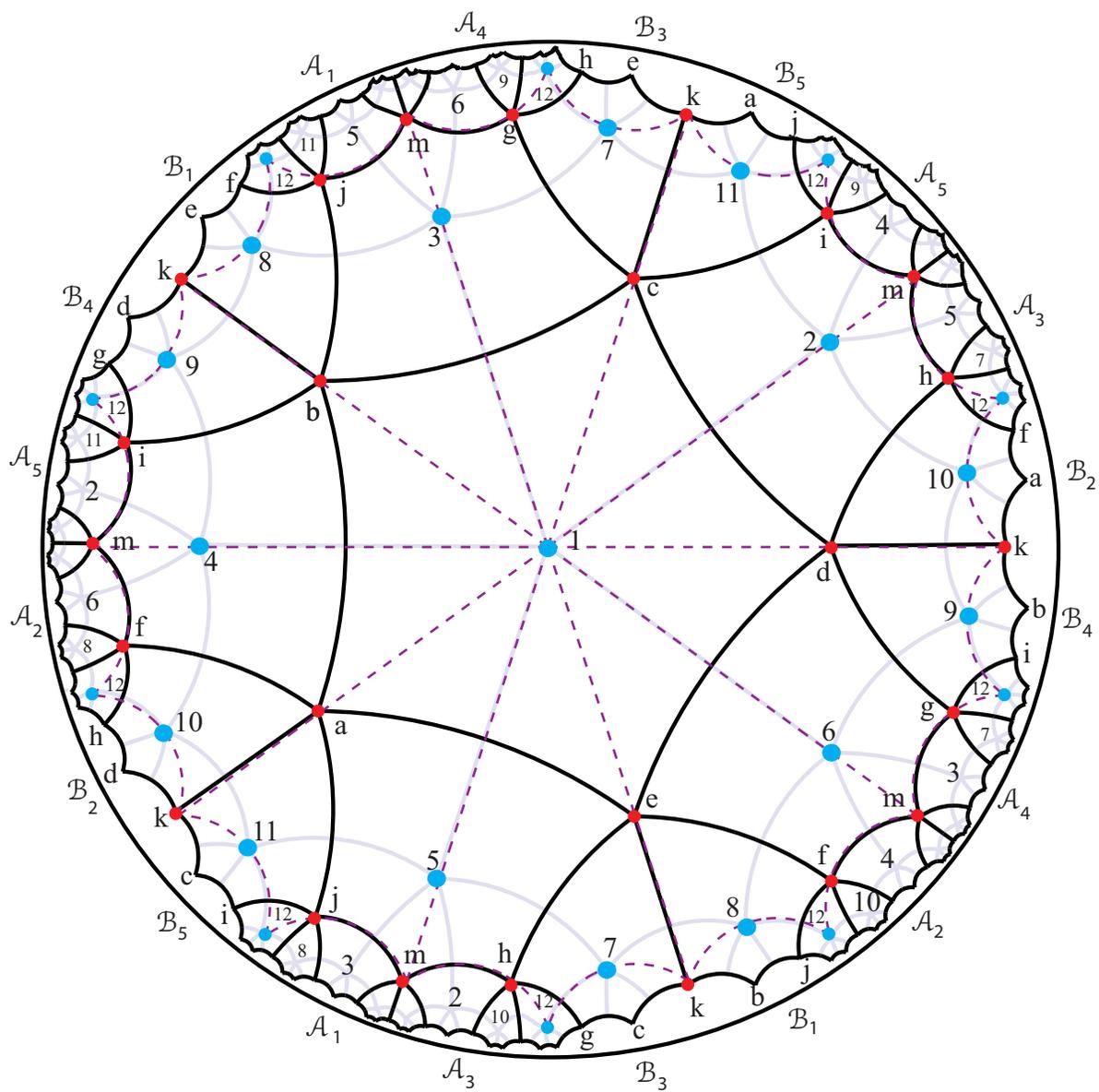}
 \caption
 {Tessellation of Bring surface by 12 pentagons}\label{fig:Bring}
\end{figure}

Cutting with scissors  this icosagon and gluing equivalent sides of its border we get a genus $4$ surface tessellated by $12$ pentagons (some pentagons are recovered by rearranging their triangular pieces). 

Indeed, this $\{5,5\}$ Tessellation has a set of $12$ vertices and $30$ edges, so the Euler-Poincar\'e characteristic is $\chi=12-30+12=-6$, corresponding to genus $4$. The relations satisfied by the sides of the icosagon (labeled $\mathcal{A}_i$ and $\mathcal{B}_i$, $i=1, \dots, 5$) are given in Threlfall's book \citep[see p. 22]{thre32}. They are $\mathcal{A}_1\mathcal{B}_1\mathcal{A}_2\mathcal{B}_2\mathcal{A}_3\mathcal{B}_3\mathcal{A}_4$ $\mathcal{B}_4$$\mathcal{A}_5$$\mathcal{B}_5$ $=\I$, $\mathcal{A}_1\mathcal{A}_4\mathcal{A}_2\mathcal{A}_5\mathcal{A}_3=\I$, and $\mathcal{B}_1\mathcal{B}_3\mathcal{B}_5$$\mathcal{B}_2\mathcal{B}_4=\I$.

Now we proceed to take two copies of this tessellated genus $4$ surface, make cuts along $6$ disjoint edges in each sheet and glue the sheets along each pair of edges (lips) produced in the cuts. We get in this way a two sheeted cover $\Gamma$ of the Bring surface $B$ which is branched over the $12$ points $\{\text{a, b, c, d, e, f, g, h, i,}$ $\text{j, k, m}\}$. One way of choosing the cuts is the following: edge(a,k)=$P_{10}\cap P_{11}$, edge(b,c)=$P_{1}\cap P_{3}$,  edge(d,e)=$P_{1}\cap P_{6}$, edge(f,j)=$P_{8}\cap P_{12}$, edge(h,m)=$P_{2}\cap P_{5}$, edge(i,g)=$P_{9}\cap P_{12}$. The new surface $\Gamma$ has an induced Tessellation consisting of $24$ hyperbolic pentagons, $12$ vertices and $60$ edges, but at each vertex $10$ pentagons meet. Indeed, as we turn around a vertex we cover $5$ pentagons; by crossing the cut we go into the other sheet and with another turn we cover the remaining $5$ pentagons that meet around the chosen vertex. Therefore, $\Gamma$ carries a Tiling of type $\{5,10\}$ and has invariants $\chi(\Gamma)=12-60+24=-24$, and $g=13$. 

On the other hand, Hurwitz formula gives the same result: $\chi(\Gamma)=2.\chi(B)- \text{ramification}$ $\text{index}=2.(-6)-12.(2-1)=-24$.

From what it was said in this paragraph we have the following:

\begin{proposition}\label{prop:three} There is a surface $\Gamma$ of genus $13$ which carries a Tiling by $24$ hyperbolic pentagons of type $\{5,10\}$. This surface is a $2$-cover of the Bring surface $B$ branched over the $12$ vertices of the $\{5,5\}$ Tessellation of $B$. The von Dyck group $\Delta^+_{13}(2,5,10)$ exists and has order $120$. 
\end{proposition}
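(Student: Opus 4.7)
The plan is to assemble the surface $\Gamma$ by the explicit branched cover construction already sketched in the paragraphs preceding the proposition, and then to verify the three assertions (the type of Tessellation, the genus, and the order of the von Dyck group) by combinatorial counting together with Riemann--Hurwitz and formula~(11).

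First I would take as input the Bring surface $B$ together with its known $\{5,5\}$ Tessellation by $12$ hyperbolic pentagons, with $V=12$, $E=30$, $F=12$, and $\chi(B)=-6$ (genus $4$). The explicit identification pattern of the icosagonal fundamental domain, with relations among the sides $\mathcal{A}_i,\mathcal{B}_i$ recorded from Threlfall, is what legitimizes starting from $B$ as a concrete object. The key preparatory step is to select $6$ pairwise disjoint edges of the $\{5,5\}$ Tiling whose $12$ endpoints cover all of $\{a,b,c,d,e,f,g,h,i,j,k,m\}$ exactly once; the list of edges given in the excerpt provides such a perfect matching on the edge graph, which I would verify by inspection of the pentagon adjacencies in Figure~\ref{fig:Bring}.

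Next I would construct $\Gamma$ as the double cover obtained by slicing both copies of $B$ along these $6$ edges and cross-gluing the two lips in the standard way. To see that $\Gamma$ is a connected branched double cover of $B$ with branch locus exactly the $12$ chosen vertices and branch index $2$ at each, I would argue that each cut contributes a transposition on sheets so that any closed loop on $B$ encircling a single cut endpoint lifts to a loop switching sheets. I would then describe the induced Tiling by inheritance: each of the $12$ pentagons of $B$ lifts to $2$ pentagons, giving $F=24$; each of the $30$ edges lifts to $2$ edges (the $6$ cut edges become $6$ identified pairs, the remaining $24$ lift to disjoint pairs), giving $E=60$; and each vertex has a single preimage in $\Gamma$, giving $V=12$. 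Consequently $\chi(\Gamma)=12-60+24=-24$, matching the Riemann--Hurwitz computation $2\chi(B)-12\cdot(2-1)=-24$, so $\Gamma$ has genus $13$.

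For the incidence I would argue locally: at a branch vertex $v$, a small loop on $B$ around $v$ meets $5$ pentagons (since the Bring Tiling has type $\{5,5\}$), and lifting this loop to $\Gamma$ doubles its length because of the branching, so the lift meets $10$ pentagons before closing up. This establishes the Tessellation type $\{5,10\}$. Finally, to identify the order of $\Delta^+_{13}(2,5,10)$, I would apply formula~(11) with $(p,q)=(5,10)$ and $g=13$, obtaining
\begin{equation*}
|\Delta_{13}(2,5,10)|=\frac{8\cdot5\cdot10\cdot(13-1)}{(5-2)(10-2)-4}=\frac{4800}{20}=240,
\end{equation*}
and then take the index-$2$ orientation-preserving subgroup to conclude $|\Delta^+_{13}(2,5,10)|=120$. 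As a sanity check I would compare with Remark~\ref{rem:second}, which gives $|\Delta_{13}(2,5,10)|=4E=240$. The main obstacle I anticipate is the local verification of vertex incidence across the cuts: one has to be careful that the two $5$-petal rosettes around a branch point on the two sheets glue along the unique incident cut edge to yield a single $10$-petal rosette on $\Gamma$, rather than splitting into two disjoint cycles; this is what forces the choice of $6$ edges to be a \emph{matching} rather than an arbitrary set hitting all vertices.
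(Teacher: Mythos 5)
Your proposal is correct and follows essentially the same route as the paper: the branched double cover of the Bring surface obtained by cutting along the six matching edges, the count $V=12$, $E=60$, $F=24$ giving $\chi=-24$ and genus $13$, the local rosette argument for incidence $10$, the Riemann--Hurwitz cross-check, and formula~(11) (halved) for $|\Delta^+_{13}(2,5,10)|=120$. The only additions beyond the paper's argument are your explicit remarks that the six cut edges form a perfect matching and that the cover is connected, which are worthwhile but minor refinements of the same proof.
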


\subsection{Construction of the surface model}\label{parala:construction}
We cannot claim that the genus $13$ surface $\Gamma$ is the model we are looking for. Indeed, by starting with any pentachord pitch class segment from a given tile in $\Gamma$ and walking trough all tiles by the inversions in $\widetilde G_5$ we find inconsistencies. That is clear because as said in Example~\ref{exa:four} the surface $\Gamma$ is not acted  by the whole group $\widetilde G_5$.

Start with a fundamental tile $T$: this is a pentagon (hyperbolic) with labeled vertices, and the labels are numbers within the (chromatic) pitch set $\{0,1,2,3,4,5,6,7,8,9,t,e\}$\footnote{Actually, any pitch set with abelian group operations on it could be used for this purpose, e.g. a diatonic group $(\Z_7,+)$.}. The labels of $T$ correspond to a given ordered pitch segment $\langle x_1,x_2,x_3,x_4,x_5\rangle$. The group $\widetilde G_5$ has the inversions $\mathscr{G}=\{p_{12},p_{23},p_{34},p_{45},p_{15}\}$ as generator set and relations contained at least in $\mathscr{R}=\{p_{12}^2=p_{23}^2=p_{34}^2=p_{45}^2=p_{15}^2=(p_{12}p_{23}p_{34}p_{45}p_{15})^2=\I, \text{ commuting relations among the}$ $\text{products } p_{ij}p_{kl}\}$.

A walk in $\mathscr{G}$ is a finite sequence $w=(p_1,p_2,\dots,p_n)$ where $p_i\in \mathscr{G}$ for $i=1,\dots,n$. The set  $\mathscr{W}$ of all walks\footnote{In other contexts these are called words.} has a group structure by concatenation of walks: $(q_1,q_2,\dots,q_m)\circ(p_1,p_2,\dots,p_n)=(p_1,p_2,\dots,p_n,q_1,q_2,\dots,q_m)$; the identity of $\mathscr{W}$ is the empty walk $(\ )$\footnote{This means do not move.}. 

Two  walks that differ by having two consecutive equal entries $p\in\mathscr{G}$ more (or less) are said to be equivalent. The set of equivalent classes $\widetilde{\mathscr{W}}$ has a group structure induced by that of $\mathscr{W}$ and each class has a unique member (called reduced walk) without two consecutive $p\in\mathscr{G}$ \citep[][Ch. 2]{rob95}. 

Since we identify pentagonal tiles with ordered sequences $\langle x_1,x_2, x_3,x_4,x_5\rangle$, we 
act on them with the group $\widetilde{\mathscr{W}}$  as follows: $\overline{w}\bullet\langle x_1,x_2, x_3,x_4,x_5\rangle=(p_1,p_2,\dots,p_n)\bullet\langle x_1,x_2, x_3,x_4,x_5\rangle=p_n\dots p_2p_1\langle x_1,x_2,$ $ x_3,x_4,x_5\rangle$, where $(p_1,p_2,\dots,p_n)$ is the reduced walk in the class $\overline{w}$ and  the last action is the $\widetilde G_5$ action. 

In this way the  defined actions of $\widetilde{\mathscr{W}}$ and $\widetilde G_5$ on tiles are both left actions. Moreover, there is a surjective homomorphism $\varphi:\widetilde{\mathscr{W}}\rightarrow\widetilde G_5$ defined by $\varphi(p_1,p_2,\dots,p_n)=p_n\dots p_2p_1$. 

Now starting from a given tile $T$, consider the union of all translated tiles $\overline{w}\bullet T$ by reduced walks  representing the elements $\overline{w}\in\widetilde{\mathscr{W}}$, i.e. $\mathscr{U}=\bigcup_{\overline{w}\in\widetilde{\mathscr{W}}} \overline{w}\bullet T$. 
Then $\widetilde{\mathscr{W}}$ acts on $\mathscr{U}$ by shuffling the tiles of $\mathscr{U}$\footnote{In $\mathscr{U}$ all  tiles $\overline{w}\bullet T$ are different and numbered by $\widetilde{\mathscr{W}}$.}. Recalling the gluing procedure we explained in [\ref{parala:procedure}], we can endow $\mathscr{U}$ with a structure of topological space: it is a simply connected space, a tree of pentagons in which two pentagons $\overline{w_1}\bullet T$ and $\overline{w_2}\bullet T$ are glued along a side if and only if $\overline{w_2}\bullet T=p(\overline{w_1}\bullet T)$ or $\overline{w_1}\bullet T=p(\overline{w_2}\bullet T)$ for some $p\in\mathscr{G}$. Each pentagon bounds on the sides with other $5$ pentagons, and in order to visualize and fit an infinite set of pentagons together in that way we need to embed this into an infinite dimensional space.

Let $\mathscr{K}$ be the kernel of $\varphi$. Then we construct the quotient space $\widetilde{\mathscr{U}}=\mathscr{U}/\mathscr{K}$ as the set of orbits $\mathscr{K}\bullet u$, with $u\in\mathscr{U}$. This space is a Hausdorff space because the action of $\widetilde{\mathscr{W}}$ (hence that of $\mathscr{K}$) on the locally compact space $\mathscr{U}$ is discrete and proper \citep[see][Th. I.6.7]{god71}. Moreover, it has an induced Tiling by the elements of $\widetilde G_5=\Z_2\ltimes\Zt^4$: $\widetilde{\mathscr{U}}=\bigcup_{g\in\widetilde G_5} gT$ is a closed surface.

The restriction of the basis $\{T_1^2,T_1^3,T_1^4,T_1^5\}$ for $\Zt^4$ to $T$ gives a smaller group $\widetilde{G}_T\simeq\Z_2\ltimes(\Zt\oplus Ab_T)$. Namely, we are assuming that there is an exact sequence $1\rightarrow Ab_T\rightarrow \widetilde{G}_T \rightarrow \D_{12} \rightarrow 1$ and a surjective map $\widetilde G_5\overset{\pi}{\rightarrow}\widetilde{G}_T\rightarrow 1$. Now, the kernel of $\pi$ can be pulled back via $\varphi$ and give the discrete group $\mathscr{H}_T\subset \widetilde{\mathscr{W}}$. We have a new tiled closed surface $\widetilde{\mathscr{V}}_T=\mathscr{U}/\mathscr{H}_T=\bigcup_{g\in\widetilde G_T} gT$.

The group $\widetilde G_T$ acts on $\widetilde{\mathscr{V}}$ discretely and at most with a finite set of fixed points. Indeed, $\Z_2$ is generated (for instance) by the inversion $p_{12}$ which moves each tile to a different one but has the middle point of the glued tiles $gT$ and $p_{12}gT$ fixed. The translations in the normal subgroup $\Zt\oplus Ab_T$ act without fixed points\footnote{This is because the tiles associated to $T_i^j\langle a,b,c,d,e\rangle$ and $\langle a,b,c,d,e\rangle$ either are equal (in whose case the translation is trivial) or do not intersect. If they are different and have a common point they must be glued along a side by an inversion $p$, so that $pT_i^j$ fixes the tile of $\langle a,b,c,d,e\rangle$, and therefore all tiles, but this is impossible for it would mean $T_i^j=p$.}.

We are in the following situation: 
\begin{enumerate}
\item A tiled closed surface $\widetilde{\mathscr{V}}_T$ with a discrete action of $G_T$ on it.
\item An exact sequence $1\rightarrow Ab_T\rightarrow \widetilde{G}_T \rightarrow \D_{12} \rightarrow 1$ and a discrete action of $\D_{12}$ on a tiled genus $13$ surface $\Gamma$ (see [\ref{parala:icosa}]).
\end{enumerate}
We want to relate these two pieces of data\footnote{Other foundations for this situation and  the construction in [\ref{parala:construction}] are found  in the paper \citep{sin88} and bibliography therein and the book \citep{dav08}.}.

The group $Ab_T$ of translations act on $\widetilde{\mathscr{V}}_T$ properly discontinuously\footnote{Namely, each point $v\in\widetilde{\mathscr{V}}_T$ has a neighborhood  such that all its translates by $Ab_T$ are pairwise disjoint.}, thus the map $\widetilde{\mathscr{V}}_T\rightarrow\widetilde{\mathscr{V}}_T/Ab_T=\widetilde \Gamma$ is a covering map and the closed surface $\widetilde \Gamma$ is built with $24$ pentagonal tiles and the group $\D_{12}$ acts discretely on $\widetilde \Gamma$ by shuffling tiles. 

Now if we identify  fundamental pentagonal tiles on $\Gamma$ and $\widetilde \Gamma$ and give an isomorphism of the acting groups $\D_{12}$, we can extend this identification to a homeomorphism between  $\Gamma$ and $\widetilde \Gamma$ that preserves pentagonal faces, sides and vertices.

Summing-up, we can state the following:

\begin{theorem}\label{theo} Let $\langle x_1, x_2, x_3, x_4, x_5\rangle=T$ be a pitch class satisfying Condition~\ref{cond:first} and such that $\widetilde G_T$ (the restriction of $\widetilde G_5$ to the pitch class) fits into an exact sequence $1\rightarrow Ab_T\rightarrow\widetilde G_T\rightarrow \D_{12}\rightarrow 1$, where $Ab_T$ is an abelian group of order $n$. Then, there is a tiled surface $\widetilde{\mathscr{V}}$ of type $\{5,10\}$ containing all tile occurrences  of $T$ under $\widetilde G_T$ assembled so that two pentagons have a common side if and only if one is a transform of the other by an inversion in $\mathscr{G}$. The surface $\widetilde{\mathscr{V}}$ is an $n$-covering (non-branched) of a surface $\Gamma$ of genus $13$ having a Tiling by $24$ hyperbolic pentagons. The Tiling of $\widetilde{\mathscr{V}}$ has $24 n$ hyperbolic pentagons and $\text{genus}(\widetilde{\mathscr{V}})=12 n +1$.
\end{theorem}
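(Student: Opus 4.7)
The plan is to adopt the surface $\widetilde{\mathscr{V}}_T$ already built in \S\ref{parala:construction} as our candidate $\widetilde{\mathscr V}$, and then to verify each of the four numerical and structural claims (tiling type, tile count, covering degree, genus) by routine combinatorial and covering-space bookkeeping. I would take the exact sequence $1\to Ab_T\to\widetilde G_T\to\D_{12}\to 1$ as a black box and work with the induced quotient of the tree of pentagons $\mathscr U$ by the pullback subgroup $\mathscr H_T\subset\widetilde{\mathscr W}$, exactly as in the construction.

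First I would count tiles: since $\widetilde G_T$ acts transitively on the tiles of $\widetilde{\mathscr{V}}_T$ and the exact sequence gives $|\widetilde G_T|=|\D_{12}|\cdot|Ab_T|=24n$, there are $24n$ pentagons. The incidence type $\{5,10\}$ for pentachords ($n=5$ odd) is Proposition~2, and the side-adjacency condition (tiles meet along an edge iff they differ by some $p_{ij}\in\mathscr G$) is precisely how $\mathscr U$ was glued in \S\ref{parala:procedure} and therefore descends to the quotient $\widetilde{\mathscr V}_T$. Next, using the freeness of the $Ab_T$-action on $\widetilde{\mathscr V}_T$ (the footnote in \S\ref{parala:construction} shows translations either fix every tile or have disjoint orbits of tiles, and the nontrivial elements of $Ab_T$ restricted to $T$ are by definition nontrivial), the projection $\widetilde{\mathscr V}_T\to\widetilde{\mathscr V}_T/Ab_T=\widetilde\Gamma$ is a degree-$n$ unbranched covering.

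The key topological step is then to identify $\widetilde\Gamma$ with the genus-$13$ Bring double cover $\Gamma$ of Proposition~\ref{prop:three}. Both surfaces carry a Tiling of type $\{5,10\}$ with exactly $24$ pentagons and admit a discrete $\D_{12}$-action that is simply transitive on tiles. I would fix a pentagon in each surface together with an ordering of its vertices, transport the chosen $\D_{12}$-isomorphism tile by tile using the generators $\{p_{12},\sigma\}$, and verify by Proposition~2 and the relations in $\mathscr R$ that the gluing instructions on the two sides agree. Because both tilings are regular of type $\{5,10\}$ and the fundamental triangles are right hyperbolic triangles with angles $\pi/2,\pi/5,\pi/10$, extending this combinatorial bijection across each pentagon by the barycentric subdivision yields a homeomorphism $\widetilde\Gamma\cong\Gamma$ preserving all incidence data. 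This identification step, together with uniqueness up to isomorphism of the surface supporting a given regular $\{p,q\}$-Tiling with a prescribed rotation group, is where I expect the real work to lie; everything else is counting.

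Finally, the genus follows immediately from Euler--Poincar\'e. The Tiling on $\widetilde{\mathscr V}_T$ has $F=24n$ faces, hence $5F=2E$ gives $E=60n$, and $10V=2E$ gives $V=12n$, so
\begin{equation*}
\chi(\widetilde{\mathscr V}_T)=V-E+F=12n-60n+24n=-24n,
\end{equation*}
whence $\text{genus}(\widetilde{\mathscr V}_T)=1-\chi/2=12n+1$. The same answer comes from Hurwitz applied to the $n$-sheeted unbranched cover $\widetilde{\mathscr V}_T\to\Gamma$: $\chi(\widetilde{\mathscr V}_T)=n\cdot\chi(\Gamma)=n\cdot(-24)$, giving $g=12n+1$, which provides a useful internal consistency check.
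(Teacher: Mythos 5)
Your proposal is correct and follows essentially the same route as the paper: it takes the surface $\widetilde{\mathscr{V}}_T$ from the construction in [\ref{parala:construction}], uses the freeness of the $Ab_T$-action to get the unbranched $n$-covering of $\widetilde\Gamma\cong\Gamma$, and derives the genus from the Euler--Poincar\'e characteristic (the paper relies on Hurwitz plus Formula~\eqref{eq:genusformula}, which you include as a consistency check alongside a direct face/edge/vertex count). Your extra care about the identification $\widetilde\Gamma\cong\Gamma$ mirrors the paragraph immediately preceding the theorem in the paper, so there is no substantive divergence.
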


\begin{proof} Condition~\ref{cond:first} already implies the exactness of the group short sequence. The arguments above and [\ref{parala:icosa}], [\ref{parala:construction}] show that there is a surface $\widetilde{\mathscr{V}}$ with a Tiling by pentagons and an $n$-covering $\widetilde{\mathscr{V}}\rightarrow\Gamma$ where $\Gamma$ is the genus $13$ surface which is a $2-1$ ramified cover of the Bring surface. As $\Gamma$ has a Tiling  of type $\{5,10\}$ by $24$ pentagons, then $\widetilde{\mathscr{V}}$ has $24 n$ pentagonal faces and since we have a covering map, the incidence of the pentagons at each pentagon vertex is preserved. Hurwitz formula yield $\chi(\widetilde{\mathscr{V}})=n \chi(\Gamma)=-24 n$ (because there is no ramification) and the genus Formula~\eqref{eq:genusformula}  give $g(\widetilde{\mathscr{V}})=12 n +1$, which shows the statement.
\end{proof}

\begin{example}\label{exa:five} In the pitch segment $\langle C, E, G, B\flat, D\rangle$ of example~\ref{exa:four} we found that $Ab_T\simeq\Zt$. Theorem 1 show that the surface on which the pentagonal tiles fit together has genus $g=12.12+1=145$. It is easy to see that the Stockhausen Klavierst\"uck III pentachord $\langle C, C\#, D, D\#, F\#\rangle$, the Schoenberg Op. 23/3 pentachord $\langle B\flat, D, E, B, C\#\rangle$, and the pentachords $\langle C, D, E, F, A\rangle$ and $\langle C, D, E\flat, G, A\flat\rangle$ used by A. Tcherepnin in Ops. 51, 52, and 53, also have the same pattern. Indeed, in each case one proves that $Ab_T\simeq\Zt$ and they satisfy Condition~\ref{cond:first}. Thus, these pentachords under the contextual inversions of $\widetilde G_5=G_5$ produce a Tiling on a genus $145$ surface. 
\end{example}

{\centering{
\section{Conclusion and Remarks}\label{sec:conclusion}
}}

The genus $145$ surface constructed can be viewed as a $24$ sheeted covering of the Tiling appearing in Figure~\ref{fig:Bring} (with identifications). Keeping track of the tile and sheet we are in may be kind of difficult in terms of computer graphics, however, a space representation obtained by the methods in \citep{van09} or \citep{seq07} seems to be harder due to the high genus. 

This Tonnetz representation may be useful and applicable in describing tone paths through similar pentachords using real time software that involves sound, images and media like Max/MSP/Jitter by Cycling '74\footnote{\href{http://cycling74.com/}{\tt http://cycling74.com/}.} .

A surface similar to $\Gamma$ but in the situation where one has other scales like in Koechlin or Stravinsky's Examples~\ref{exa:koechlin},~\ref{exa:stravinsky} (diatonic $\Z_7$) need to be addressed particularly. Our genus $13$ surface would not work since the dihedral group $\D_7$ is not contained in the von Dyck group $\Delta^+_{13}(2,5,10)$. So another highly symmetric surface is needed possessing  a Tiling $\{5,10\}$ by pentagons.

Also in order to model hexachords, heptachords, etc. other surfaces like $\Gamma$ with appropriate Tessellations ($\{6,6\}$, $\{7,14\}$, etc.) have to be found. Then our procedures and methods can be applied in a similar fashion.

\vspace{.5cm}

We notice that what is basically important is not precisely the Tiling of a surface but a graph containing the pitch information (or other musical information) embedded into some space (eventually a surface as we did it here). This reverts the problem of representing some kind of musical data  to that of ``Graph Drawing''\footnote{There is an active community of mathematicians and yearly proceedings on this subject \href{http://www.graphdrawing.org/}{\tt http://www.graphdrawing.org/}.} and most papers in Music Theory take this approach \citep{lew107,co03,dou98, gol98,rock09,fit11}. We pursued here the embedding of a pentachord network graph into a closed surface, which is in principle a harder problem than a singular Graph Drawing. By representing the dual Tessellations any pentachord is a vertex in the dual graph, edges relate contiguous pentachords and tiles (now decagons) close up the graph to a surface. Here a walk is a real path in this graph from vertex to vertex. This dual graph is very interesting and  a well known object in Graph Theory: the Cayley Graph\footnote{Information on this in \citep{han04}.} of our group $\widetilde G_5$ with generating set $\mathscr{G}$ (or that of the group $\widetilde G_T$ with the restricted generators).

\vspace{.5cm}

For the purpose of algebraic calculations it is easier to consider a multidimensional torus\footnote{However, we eventually lose the possibility of having a space representation.} with translations in $\Zt$ (i.e. a higher dimensional analog of the \"Ottingen-Riemann Tonnetz). This paradigm is understandable to the light of curves and their Jacobians. Any surface (Riemann surface or complete algebraic complex curve) $\Gamma$\footnote{Here I am not specifying which $\Gamma$ to take. It could be the genus $4$ Bring surface of our setting, or the genus $13$ curve in Proposition~\ref{prop:three}, or the genus $145$ curve of Example~\ref{exa:five}. Making this precise requires further developement.} of genus $g$ can be embedded into its Jacobian $Jac^g(\Gamma)$ \citep{mu77}, which is a $2 g$-dimensional real torus\footnote{$Jac^g(\Gamma)\equiv (\R/\Z)^{2 g}$} with algebraic operations. The torus where the translations $T_i^j$ occur is isogenous to the Jacobian of $\Gamma$ (i.e. either a covering or a quotient of it). So we say that in the Jacobian of $\Gamma$ one can interpret or merge both, the Tiling of the surface $\Gamma$ and the translations of the group $\widetilde G_5$, as we did  in the examples of Figures~\ref{fig:small} and~\ref{fig:tetra} for the genus $1$ case. Of course, beyond genus $1$ we do not have nice graphic representations for these tori.

\vspace{.5cm}

Assume we have a set of generators $\mathscr{G}=\{p_{1},p_{2},p_{3},p_{4},p_{5}\}$ for the group $\widetilde G_5$ so that we can write $p_{i+1}=p_1t_i$, $i=1,2,3,4$, where the $t_i$'s are translations in $\Zt$. The group $\widetilde G_5$ consists of the reflection $p_1=p$ and the translations $\{t_1,t_2,t_3,t_4\}$.  How should we encode the data for a single path in these generators? For instance $p_5p_3p_2p_4=pt_4.pt_2.pt_1.pt_3=t_4^{-1}t_2t_1^{-1}t_3$, where we have used $ptp=t^{-1}$ for any translation. Another example gives $p_5p_3p_2p_4p_5=pt_4.pt_2.pt_1.pt_3.pt_5=t_4^{-1}t_2t_1^{-1}t_3t_5^{-1}p$. We can give the following rule in case the reduced path contains only the generators in $\mathscr{G}\textbackslash\{p\}$.
\begin{enumerate}
\item if length of path is even change each occurrence of $p_{i+1}$ in an odd position by $t_i^{-1}$ and each occurrence of $p_{i+1}$ in an even position by  $t_i$. 
\item if length of path is odd do the same as in 1. but insert $p$ at the end.
\end{enumerate}
Some other examples when the path starts with $p$ and then follows by elements in $\mathscr{G}\textbackslash\{p\}$: $pp_5p_4p_3p_2=p.pt_4.pt_3.pt_2.pt_1=t_4t_3^{-1}t_2t_1^{-1}p$, or $pp_5p_4p_3=p.pt_4.pt_3.pt_2=t_4t_3^{-1}t_2$. In these cases we go by the rule.
\begin{enumerate}
\item if length of path is even change each occurrence of $p_{i+1}$ in an odd position by $t_i$, each occurrence of $p_{i+1}$ in an even position by  $t_i^{-1}$ and delete the initial $p$.
\item if length of path is odd do the same as in 1. but insert $p$ at the end.
\end{enumerate} 
In case the path has several occurrences of $p$, we separate it into segments with a starting $p$ and apply to each successive segment the rules above. An example will clarify this where we have put for short number $i$ instead of $p_i$:
$3241541451323=(324)(154)(145)(1323)=t_2^{-1}t_1t_3^{-1}p(154)(145)(1323)=t_2^{-1}t_1t_3^{-1}t_4^{-1}t_3t_3t_4^{-1}p(1323)=t_2^{-1}t_1t_3^{-1}t_4^{-1}t_3t_3t_4^{-1}t_2^{-1}t_1t_2^{-1}p=$ $t_1^2t_2^{-3}$ $t_3t_4^{-2}p$. 
 
\vspace{.5cm}
{\sc ACKNOWLEDGMENTS:} I want to thank Alejandro Iglesias Rossi for his inspiring orchestra, Mariano Fern\'andez who taught me how to use Max/MSP/Jitter and colleagues Emiliano L\'opez and Fabi\'an Quiroga for their helpful suggestions. 


\bibliographystyle{apalike}

\begin{flushleft}
{\sc Luis~A.~Piovan}\\
{\sc Departamento de Arte y Cultura\\
Viamonte esq. San Mart\'{i}n\\
 Universidad Nacional de Tres de Febrero \\
Ciudad de Buenos Aires, Argentina.\\
 and \\}
   {\sc Departamento de Matem\'atica\\
   Av. Alem 1253\\
Universidad Nacional del Sur\\
Bahía Blanca, Argentina.}\\
\href{mailto:impiovan@criba.edu.ar}
{\tt impiovan@criba.edu.ar}
\end{flushleft}


\end{document}